\theoremstyle{plain}
\newtheorem{thm}{Theorem}[section]
\newtheorem{lem}[thm]{Lemma}
\newtheorem{prop}[thm]{Proposition}
\numberwithin{equation}{section}
\newcommand{\be}%
  {\protect\setcounter{equation}{\value{subsubsection}}}  
\newcommand{\ee}%
  {\protect\setcounter{subsubsection}{\value{equation}}}
\begin{document}

\title{Generalizations of Jacobsthal Sums and hypergeometric series over finite fields}

\author{Pramod Kumar Kewat and Ram Kumar}
\address{Department of Applied Mathematics\\ Indian Institute of Technology (ISM), Dhanbad-826004\\ Jharkhand, India}
\email{pramodk@iitism.ac.in, ramkumarbhu1991@gmail.com}

\begin{abstract}
For non-negative integers $l_{1}, l_{2},\ldots, l_{n}$, we define character sums $\varphi_{(l_{1}, l_{2},\ldots, l_{n})}$
and $\psi_{(l_{1}, l_{2},\ldots, l_{n})}$ over a finite field which are 
generalizations of Jacobsthal and modified Jacobsthal sums, respectively. We express these character sums in terms of Greene's finite 
field hypergeometric series. We then express the number of points on the hyperelliptic curves 
$y^2=(x^m+a)(x^m+b)(x^m+c)$ and $y^2=x(x^m+a)(x^m+b)(x^m+c)$ over a finite field in terms of the character sums $\varphi_{(l_{1}, l_{2}, l_{3})}$ and $\psi_{(l_{1}, l_{2}, l_{3})}$, 
 and finally obtain expressions in terms of the finite field hypergeometric series.
\end{abstract}
\subjclass{11G20, 11T24}
\keywords{Character sum; Jacobsthal Sum; Hyperelliptic curves; Hypergeometric series over finite fields}
\maketitle
\markboth{P. K. Kewat and R. Kumar}{Generalizations of Jacobsthal Sums}
\section{Introduction And Statement of Results}
Let $p$ be an odd prime, and let $\mathbb{F}_q$ denote the finite field with $q$ elements, where $q=p^e, e\geq 1$. Let $\widehat{\mathbb{F}_q^\times}$ 
denote the group of multiplicative characters $\chi$ on $\mathbb{F}_q^{\times}$. It is well known that $\widehat{\mathbb{F}_q^\times}$ is a cyclic group of order $q-1$. One extends the domain of each character $\chi \in \widehat{\mathbb{F}_q^{\times}}$ 
to all of $\mathbb{F}_q$ by setting $\chi(0):=0$ including the trivial character $\varepsilon$. We denote by $\overline{\chi}$ the character inverse of $\chi$. 
Throughout the paper, the notation $\varphi$ and $\varepsilon$ 
are reserved for quadratic and trivial characters of $\mathbb{F}_q$, respectively. Also, $T$ denotes a fixed generator of $\widehat{\mathbb{F}_q^\times}$. 
For two characters $A$ and $B$ on $\mathbb{F}_q$, 
the binomial coefficient ${A \choose B}$ is defined by
\begin{align}
{A \choose B}:=\frac{B(-1)}{q}J(A,\overline{B})=\frac{B(-1)}{q}\sum_{x \in \mathbb{F}_q}A(x)\overline{B}(1-x),\notag
\end{align}
where $J(A, B)$ denotes the usual Jacobi sum.
Considering the integral representation for the classical hypergeometric series, Greene \cite{greene1987hypergeometric}
defined a finite field analogue of the classical hypergeometric series as follows. Let $A, B, C$ be multiplicative characters on $\mathbb{F}_q$. 
Greene's $_{2}F_1$-finite field
hypergeometric series is defined as
\begin{align}\label{d1.2}
{_{2}}F_1\left(\begin{array}{cc}
                A, & B\\
                 & C
              \end{array}\mid x \right)=\varepsilon(x)\frac{BC(-1)}{q}\sum_{y\in \mathbb{F}_q}B(y)\overline{B}C(1-y)\overline{A}(1-xy).
\end{align}
Greene \cite[Theorem 3.6]{greene1987hypergeometric} expressed the above ${_{2}}F_1$-finite field hypergeometric series in terms of binomial coefficients 
 as given below.
\begin{align}\label{Greene-def-3}
{_{2}}F_1\left(\begin{array}{cc}
                A, & B\\
                 & C
              \end{array}\mid x \right)=\frac{q}{q-1}\sum_{\chi\in \widehat{\mathbb{F}_q^\times}}{A\chi \choose \chi}{B\chi \choose C\chi}\chi(x).
\end{align}
In general, for positive integer $n$, Greene \cite{greene1987hypergeometric} defined the ${_{n+1}}F_n$- finite field hypergeometric series by
\begin{align}\label{greene-def-2}
{_{n+1}}F_n\left(\begin{array}{cccc}
A_0, & A_1, & \ldots, & A_n\\
& B_1, & \ldots, & B_n
\end{array}\mid x \right)
=\frac{q}{q-1}\sum_{\chi\in \widehat{\mathbb{F}_q^\times}}{A_0\chi \choose \chi}{A_1\chi \choose B_1\chi}
              \cdots {A_n\chi \choose B_n\chi}\chi(x),
\end{align}
where $A_0, A_1,\ldots, A_n$ and $B_1, B_2,\ldots, B_n$ are multiplicative characters on $\mathbb{F}_q$.
\par Finite field hypergeometric series were developed mainly to simplify character sum evaluations. In \cite{Ono}, Ono evaluated certain character sums and expressed those sums in terms of finite field hypergeometric series. Williams \cite{williams1979evaluation} evaluated the character sum for certain quadratic polynomials. Recently, Lin and Tu found identities between the twisted Kloosterman sums and the finite field 
hypergeometric series \cite{Lin-Tu}. Recently Many mathematicians evaluated the number of $\mathbb{F}_q$-points of certain algebraic varieties with the 
help of hypergeometric function over finite fields.
(for more detail see \cite{barman2012hypergeometric,barmanhyperelliptic,barman2014hyperelliptic}.
\par Let $n$ be a positive integer and $\varphi$ the quadratic character on $\mathbb{F}_q$. 
If $a\in\mathbb{F}^{\times}_q$, then the Jacobsthal sum $\varphi_{n}(a)$ is defined by
\begin{equation}
 \varphi_{n}(a):=\sum_{x\in\mathbb{F}_q}\varphi(x)\varphi(x^n+a)
\end{equation}
and the modified Jacobsthal sum $\psi_{n}(a)$ is defined by
\begin{equation}
\psi_{n}(a):=\sum_{x\in\mathbb{F}_q}\varphi(x^n+a). 
\end{equation}
In \cite{berndt1979sums}, some of these character sums are evaluated for small values of $n$.  
Let $m, n$ be positive integers. For $a, b\in \mathbb{F}_q^{\times}$,  Sadek \cite{sadek2016jacobs} defined the character sums $\varphi_{m,n}(a,b)$
and $\psi_{m,n}(a,b)$ as generalizations of the Jacobsthal and the modified Jacobsthal sums respectively. He defined these character sums as follows:
\begin{align}\label{e1.6}
 \varphi_{(m, n)}(a, b)&:=\sum_{x\in\mathbb{F}_q}\varphi(x)\varphi(x^m+a)\varphi(x^n+b),\\
 \psi_{(m, n)}(a, b)&:=\sum_{x\in\mathbb{F}_q}\varphi(x^m+a)\varphi(x^n+b).
\end{align}
He studied basic properties of these character sums, and evaluated them when $m$ and $n$ are small powers of $2$. Also, he used these sums to find
the number of $\mathbb{F}_q$- rational points on the hyperelliptic curves $y^2=(x^m+a)(x^m+b)$ and $y^2=x(x^m+a)(x^m+b)$.
\par In this article, we further generalize these character sums. Let $l_1, l_2,\ldots, l_n$ be non-negative integers. For 
$a_1, a_2,\ldots, a_n\in \mathbb{F}_q^{\times}$, we define 
\begin{equation}\label{e1.3}
\varphi_{(l_1,l_2,\ldots,l_n)}(a_1,a_2,\ldots,a_n)=\sum_{x\in\mathbb{F}_q}\varphi(x)\varphi(x^{l_{1}}+a_1)\varphi(x^{l_{2}}+a_2)
\cdots\varphi(x^{l_n}+a_n)
\end{equation}
and
\begin{equation}\label{e1.4}
\psi_{(l_1,l_2,\ldots,l_n)}(a_1,a_2,\ldots,a_n)=\sum_{x\in\mathbb{F}_q}\varphi(x^{l_{1}}+a_1)\varphi(x^{l_{2}}+a_2)
\cdots\varphi(x^{l_n}+a_n).
\end{equation}
We study some basic properties of $\varphi_{(l_1,l_2,\ldots,l_n)}(a_1,a_2,\ldots,a_n)$ and
$\psi_{(l_1,l_2,\ldots,l_n)} (a_1,a_2,\ldots,a_n)$,
and obtain some special values
of these character sums for $n=3$. We express them in terms of Greene's finite field hypergeometric series for certain values of
$l_1,l_2,l_3$. We also express the number of 
$\mathbb{F}_q$-rational points on hyperelliptic curves in terms of these character sums.
More precisely, for $a, b, c\in\mathbb{F}^{\times}_q$ with all distinct, we consider the hyperelliptic curves 
\begin{align}
E_m:& y^2=(x^m+a)(x^m+b)(x^m+c),\\ 
E'_m:& y^2=x(x^m+a)(x^m+b)(x^m+c).
\end{align}
 From \cite{barman2014hyperelliptic}, we recall  that for an hyperelliptic curve $C: y^2=f(x)$ defined over a finite field $\mathbb{F}_q$, the number of points on $C$ including the points at infinity
is given by 
\begin{align}\label{points-EC}
 \#C(\mathbb{F}_q)& = r + \#\{(x, y) \in \mathbb{F}_q^2 : y^2=f(x)\}\notag\\
 &= r + \sum_{x\in \mathbb{F}_q}[1+\varphi(f(x))]\notag\\
 &= r + q + \sum_{x\in \mathbb{F}_q}\varphi(f(x)),
\end{align}
where $r=1$ if $\text{deg}~f(x)$ is odd; and $r=2$ if $\text{deg}~f(x)$ is even. Here, $r$ is the number of $\mathbb{F}_q$-rational points at infinity. 
\par
In the following theorem, we express the number of $\mathbb{F}_q$-rational points on the hyperelliptic curves $E_m$ and $E'_{m}$
in terms of the Greene's finite field hypergeometric series with the help of the character sums $\psi_{(l,m,n)}(a,b,c)$ and $\varphi_{(l,m,n)}(a,b,c)$.
\begin{thm}\label{t1}
Let $\mathbb{F}_q$ be a finite field with $q\equiv 1 \pmod{2m}$. If $\chi_{2m}$ is a multiplicative character of order $2m$ on 
$\mathbb{F}_q$, then we have
\begin{align*}
\#E_m(\mathbb{F}_q)&=r+q+\frac{q^2\varphi(-abc)}{q-1}\sum_{k=0}^{m-1}\sum_{\chi}{\varphi \choose \chi}\overline{\chi}(a)\chi\chi^{k}_m(-b)~
{_2}F_1\left(
\begin{array}{cc}
 \varphi,&\chi\chi^k_m\\
 &\chi\chi^{m+2k}_{2m}
\end{array}|\frac{b}{c}
\right),\\
\#E'_m(\mathbb{F}_q)&=r+q+\frac{q^2\varphi(-abc)}{q-1}\sum_{k=0}^{m-1}\chi^{2k+1}_{2m}(-b)\sum_{\chi}{\varphi \choose \chi}\chi\left(\frac{-b}{a}\right)
{_2}F_1\left(
\begin{array}{cc}
 \varphi,&\chi\chi^{2k+1}_{2m}\\
 &\chi\chi^{m+2k+1}_{2m}
\end{array}| \frac{b}{c}
\right),
\end{align*}
where $r=1$ if $m$ is odd and $r=2$ if $m$ is even.
\end{thm}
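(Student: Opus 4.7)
The plan is to combine the point-counting formula \eqref{points-EC} with an explicit hypergeometric evaluation of the three-factor character sums $\psi_{(m,m,m)}(a,b,c)$ and $\varphi_{(m,m,m)}(a,b,c)$. Taking $f(x)=(x^m+a)(x^m+b)(x^m+c)$ and $f(x)=x(x^m+a)(x^m+b)(x^m+c)$ in \eqref{points-EC} and using multiplicativity of $\varphi$, one immediately obtains
\[
\#E_m(\mathbb{F}_q)=r+q+\psi_{(m,m,m)}(a,b,c),\qquad \#E'_m(\mathbb{F}_q)=r+q+\varphi_{(m,m,m)}(a,b,c).
\]
The task therefore reduces to expressing each of these character sums in the claimed hypergeometric form.

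For $\psi_{(m,m,m)}(a,b,c)$ I would substitute $y=x^m$: for $y\ne 0$ the number of preimages is $\sum_{k=0}^{m-1}\chi_m^k(y)$, where $\chi_m:=\chi_{2m}^2$ has order $m$. For $\varphi_{(m,m,m)}(a,b,c)$ the extra factor $\varphi(x)=\chi_{2m}^m(x)$ must be retained; the hypothesis $q\equiv 1\pmod{2m}$ forces the $m$-th roots of unity to lie among the squares of $\mathbb{F}_q^\times$, so a short computation yields $\sum_{x^m=y}\varphi(x)=\sum_{k=0}^{m-1}\chi_{2m}^{2k+1}(y)$ for $y\ne 0$. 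Both character sums thus reduce to expressions of the shape $\sum_{k=0}^{m-1}\sum_{y\in\mathbb{F}_q}\Xi_k(y)\,\varphi(y+a)\varphi(y+b)\varphi(y+c)$, where $\Xi_k=\chi_m^k$ in the $\psi$-case and $\Xi_k=\chi_{2m}^{2k+1}$ in the $\varphi$-case.

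I would then normalize each inner sum via $y=-bz$, which pulls out the prefactor $\Xi_k(-b)\varphi(abc)$ and converts the integrand into $\Xi_k(z)\varphi(1-bz/a)\varphi(1-z)\varphi(1-bz/c)$. Expanding the first factor by the Fourier identity $\varphi(1-u)=\frac{q}{q-1}\sum_\chi{\varphi\choose\chi}\chi(-1)\chi(u)$ (valid on $\mathbb{F}_q^\times$; the $u=0$ boundary is annihilated by $\Xi_k(0)=0$) introduces the summation $\sum_\chi{\varphi\choose\chi}$ together with the factor $\chi(-b/a)=\overline{\chi}(a)\chi(-b)$; combining $\Xi_k(-b)\chi(-b)=\chi\Xi_k(-b)$ then produces the $\chi\chi_m^k(-b)$ (respectively $\chi_{2m}^{2k+1}(-b)$) appearing in the theorem. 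The remaining $z$-sum matches the integral representation \eqref{d1.2} of ${_2}F_1(\varphi,\chi\Xi_k;\chi\Xi_k\varphi\mid b/c)$, using $BC(-1)=\varphi(-1)$ because $(\chi\Xi_k)^2(-1)=1$. The identifications $\chi_m^k\cdot\varphi=\chi_{2m}^{2k+m}$ and $\chi_{2m}^{2k+1}\cdot\varphi=\chi_{2m}^{2k+m+1}$ yield the denominator characters in the theorem, and assembling the remaining prefactors gives $\frac{q^2\varphi(-abc)}{q-1}$ via $\varphi(abc)\varphi(-1)=\varphi(-abc)$.

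The main obstacle is the careful bookkeeping of character indices and sign factors through the chain of substitutions, in particular matching the boundary term $\varphi(abc)$ arising from $x=0$ in the $\psi$-case against the $\chi=\varepsilon$ contribution in the hypergeometric sum (where ${\varphi\choose\varepsilon}=-1/q$). One should also double-check the parity rule for $r$: it matches $E_m$ (where $\deg f=3m$) but for $E'_m$ (where $\deg f=3m+1$) the parity is reversed, and this should be reconciled with the paper's convention for the number of points at infinity. A useful sanity check is the $m=1$ specialization, which should recover the classical ${_2}F_1$ expression for the number of points on the elliptic curve $y^2=(x+a)(x+b)(x+c)$.
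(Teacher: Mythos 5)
Your proposal follows essentially the same route as the paper's proof: reduce to $\psi_{(m,m,m)}(a,b,c)$ and $\varphi_{(m,m,m)}(a,b,c)$ via \eqref{points-EC}, convert $x^m$ to a linear variable by counting preimages with powers of $\chi_{2m}$ (the paper's Lemma~\ref{l3.5}), expand the quadratic factor involving $a$ by the binomial identity of Lemma~\ref{l1.4}, substitute $x\mapsto -bx$, and match the remaining sum to Greene's integral representation \eqref{d1.2}, with the same bookkeeping $BC(-1)=\varphi(-1)$ and $\chi_m^k\varphi=\chi_{2m}^{2k+m}$. Your side remarks are also on target --- in particular, your observation that the stated parity rule for $r$ fits $\deg f=3m$ for $E_m$ but is reversed for $E'_m$ (where $\deg f=3m+1$) flags a genuine slip in the theorem's statement that the paper's own proof does not address.
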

Let $\chi_{1},\ldots,\chi_{k}$ be multiplicative characters on $\mathbb{F}_q$. For a given $a\in\mathbb{F}_q$, the character sum $J_a$ is defined by 
\begin{equation}
J_{a}(\chi_{1},\ldots,\chi_{k})=\sum_{c_{1}+\cdots+c_{k}=a}\chi_{1}(c_{1})\ldots\chi_{k}(c_{k}), 
\end{equation}
where the summation is extended over all $k$-tuples $(c_{1},\ldots,c_{k})$ of elements of $\mathbb{F}_q$ with $c_1+\cdots+c_k=a$. If we put 
$a=1$, then $J_a$ is the generalized Jacobi sum (see \cite[Chapter 10]{berndt1998gauss}). Using the generalizations of Jacobsthal sums, we prove the following result.
\begin{thm}\label{t3}
Let $q\equiv 1 \pmod{m}$, where $m=2^n$. For $a_i, b_i\in\mathbb{F}^{\times}_q$, $i=1,\ldots,n$, put $a=a_1+\cdots+a_n$ and $b=b_1+\cdots+b_n$.
Let $\psi,\chi_1,\ldots,\chi_n$ be multiplicative characters on $\mathbb{F}_q$ and $\psi_{m}$ be a character of order $m$ on $\mathbb{F}_q$. Then
\begin{align*}
\sum_{x\in\mathbb{F}_q}\psi(ax^m)\chi_1(b_1-a_{1}x^m)\cdots\chi_n(b_n-a_nx^m)=\sum_{k=0}^{m-1}\psi_2^k(a)J_b(\psi_{m}^{k}\psi,\chi_1,\ldots,
\chi_n). 
\end{align*}
\end{thm}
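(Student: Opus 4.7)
The plan is to apply the standard substitution $y = x^m$ combined with character orthogonality. Since $q\equiv 1\pmod m$ and $\psi_m$ has order $m$, for every $y\in\mathbb{F}_q^{\times}$ one has
\[
\#\{x \in \mathbb{F}_q^\times : x^m = y\} \;=\; \sum_{k=0}^{m-1} \psi_m^k(y).
\]
The $x=0$ contribution to the left-hand side vanishes because $\psi(a\cdot 0)=0$ in the extended convention. After interchanging the two summations, the left-hand side becomes
\[
\sum_{k=0}^{m-1}\,\sum_{y \in \mathbb{F}_q} \psi_m^k(y)\, \psi(ay)\, \prod_{i=1}^{n}\chi_i(b_i - a_i y).
\]

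I would then make the substitution $z = ay$ in each inner sum. Multiplicativity converts the joint factor $\psi_m^k(y)\,\psi(ay)$ into $\overline{\psi_m^k}(a)\,(\psi_m^k\psi)(z)$, while $b_i - a_iy$ becomes $b_i - (a_i/a)z$. The next step is to recognize the resulting expression as a generalized Jacobi sum via the parametrization $c_0 = z$ and $c_i = b_i - (a_i/a)z$, which satisfies $c_0 + c_1 + \cdots + c_n = b$ by the hypotheses $a = \sum a_i$ and $b = \sum b_i$, precisely matching the constraint in the definition of $J_b(\psi_m^k\psi,\chi_1,\ldots,\chi_n)$. The weight $\overline{\psi_m^k}(a)$ then needs to be massaged into the advertised $\psi_2^k(a)$ via the relation $\psi_2 = \psi_m^{m/2}$, which is available because $m = 2^n$ is even.

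The main obstacle I anticipate is reconciling the single variable $z$ with the full $n$-dimensional summation defining $J_b(\psi_m^k\psi,\chi_1,\ldots,\chi_n)$. The case $n=1$ causes no difficulty: $J_{b_1}(\chi_0,\chi_1) = \sum_{z}\chi_0(z)\chi_1(b_1-z)$ is itself a one-variable sum, and the identity drops out of the substitution directly together with the character manipulation above. For general $n\ge 2$, I expect to need an induction on $n$, splitting the character $\psi_{2^n}$ through the $2$-adic tower $\psi_2 \subset \psi_4 \subset \cdots \subset \psi_{2^{n-1}} \subset \psi_{2^n}$ and using the induction hypothesis to peel off one factor $\chi_i(b_i - a_i y)$ at a time. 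The explicit hypothesis $m = 2^n$ is essential: it guarantees all intermediate characters $\psi_{2^j}$ exist on $\mathbb{F}_q^{\times}$, and it is precisely what converts the natural weight $\overline{\psi_m^k}(a)$ arising from the substitution into the form $\psi_2^k(a)$ appearing on the right, via telescoping identities among the tower characters.
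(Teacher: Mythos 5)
Your opening steps are correct and, in substance, match the paper's: the paper reaches the same decomposition by iterating the identity $\sum_x f(x^2)=\sum_x f(x)+\sum_x\varphi(x)f(x)$ of Lemma \ref{l1.1} down the $2$-adic tower (Lemmas \ref{c1.7} and \ref{c1.8} plus induction), while you get it in one stroke from orthogonality; for $m=2^n$ both routes give
\[
\sum_{x\in\mathbb{F}_q}\psi(ax^m)\prod_{i=1}^{n}\chi_i(b_i-a_ix^m)
=\sum_{k=0}^{m-1}\overline{\psi_m^{k}}(a)\sum_{z\in\mathbb{F}_q}(\psi_m^{k}\psi)(z)\prod_{i=1}^{n}\chi_i\!\left(b_i-\frac{a_i}{a}z\right).
\]
Up to here your argument is sound, including the treatment of the $x=0$ term.

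However, the step you explicitly defer is a genuine gap, and the plan you sketch for closing it would not succeed. First, the inner sum above runs over the line $\{(z,\,b_1-\frac{a_1}{a}z,\ldots,b_n-\frac{a_n}{a}z):z\in\mathbb{F}_q\}$, which for $n\ge 2$ is a proper one-dimensional subset of the $n$-dimensional solution set of $c_0+c_1+\cdots+c_n=b$ over which $J_b(\psi_m^{k}\psi,\chi_1,\ldots,\chi_n)$ is defined; satisfying the linear constraint is necessary but nowhere near sufficient. Your proposed induction on $n$ through the tower $\psi_2,\psi_4,\ldots,\psi_{2^n}$ cannot repair this: that tower is governed by the exponent $m$, whereas the missing $n-1$ free summation variables come from the number of factors $\chi_i$, and no splitting of characters in the single variable $x$ creates new variables $c_1,\ldots,c_n$. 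Indeed, for $n\ge 2$ and generic nontrivial characters the line sum has modulus $O(\sqrt{q})$ by Weil while $|J_b|=q^{n/2}$, so the two sides cannot be identified by any manipulation; the paper's own proof of Lemma \ref{c1.7} slides over exactly this point with the phrase ``from the definition of Jacobi sum,'' which is only justified when $n=1$. Second, the weight does not come out as you hope: $\overline{\psi_m^{k}}=\psi_m^{-k}$ while $\psi_2^{k}=\psi_m^{km/2}$, and $-k\equiv km/2\pmod{m}$ already fails for $m=4$, $k=1$, so the relation $\psi_2=\psi_m^{m/2}$ does not convert your (correct) coefficient $\overline{\psi_m^{k}}(a)$ into $\psi_2^{k}(a)$ for any $m\ge 4$. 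The honest conclusion is that your displayed intermediate identity is the correct statement, and the passage from it to the asserted right-hand side is not merely unfinished but, as described, unobtainable.
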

Let $T$ be a generator of the character group $\widehat{\mathbb{F}^{\times}_q}$. 
In the following results, we express the character sums $\psi_{1,1,1}(a,b,c)$, $\varphi_{(1, 1, 1)}(a, b, c)$ and $\psi_{2,2,2}(a,b,c)$  in terms of Greene's $_2F_1$ -hypergeometric series. 
\begin{thm}\label{t3.4}
Let $a,b,c\in\mathbb{F}^{\times}_q$ with all distinct and $q\equiv 1 \pmod{6}$. If $h\in \mathbb{F}_q^{\times}$ satisfies $3h^2+2(a+b+c)h+ab+bc+ca=0$, then
\begin{align*}
\psi_{(1,1,1)}(a,b,c)=
q\varphi(-3d){_2}F_1
\left(
\begin{array}{cc}
T^{\frac{(q-1)}{6}},&T^{\frac{5(q-1)}{6}}\\
&\varepsilon
\end{array}|-\frac{3^3e}{2^2d^3}
\right),
\end{align*}
where $d=3h+a+b+c$ and $e=h^3+(a+b+c)h^2+(ab+bc+ca)h+abc$.
\end{thm}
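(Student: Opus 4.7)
Set $\chi_6 := T^{(q-1)/6}$, so $\chi_6$ has order $6$ and $T^{5(q-1)/6} = \chi_6^{-1}$. The strategy is to reduce $\psi_{(1,1,1)}(a,b,c) = \sum_x \varphi((x+a)(x+b)(x+c))$ to a sum of the form $\sum_y \varphi(y^3+dy^2+e)$ by a shift that exploits the hypothesis on $h$, and then to identify the resulting sum with Greene's ${_2}F_1(\chi_6,\chi_6^{-1};\varepsilon \mid \cdot)$ via a Cardano-type substitution that manufactures sixth-order characters.

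First I carry out the change of variables $x = y + h$. Expanding $(y+h+a)(y+h+b)(y+h+c)$, the coefficients of $y^2$, $y$, and $y^0$ are $d = 3h+(a+b+c)$, $3h^2 + 2(a+b+c)h + (ab+bc+ca)$, and $e = (h+a)(h+b)(h+c)$ respectively. The middle coefficient vanishes by the hypothesis on $h$, so
\[
\psi_{(1,1,1)}(a,b,c) \;=\; \sum_{y \in \mathbb{F}_q} \varphi(y^3 + dy^2 + e).
\]
Rescaling via $y = dw$ (the degenerate case $d = 0$ being handled separately) gives $\varphi(d)\sum_w \varphi(w^3 + w^2 + \mu)$ with $\mu = e/d^3$. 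Since $\varphi(d)\varphi(-3) = \varphi(-3d)$ and $-27e/(4d^3) = -27\mu/4$, Theorem \ref{t3.4} reduces to the universal identity
\[
\sum_{w\in\mathbb{F}_q} \varphi(w^3 + w^2 + \mu) \;=\; q\,\varphi(-3) \cdot {_2}F_1\!\left(\begin{array}{cc}\chi_6, & \chi_6^{-1}\\ & \varepsilon\end{array}\Big|-\tfrac{27\mu}{4}\right).
\]

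To prove this identity, I would depress the cubic via $w = z - \tfrac13$ to obtain $\sum_z \varphi(z^3+pz+q)$ with $p = -\tfrac13$ and $q = \tfrac{2}{27}+\mu$, and then apply the Tschirnhaus substitution $z = \alpha(t + t^{-1})$ with $\alpha^2 = -p/3 = 1/9$. A direct computation gives $z^3 + pz = \alpha^3(t^3 + t^{-3})$, whence $\varphi(z(t)^3 + p\,z(t) + q) = \varphi(t)\,\varphi(\alpha^3 t^6 + qt^3 + \alpha^3)$. The map $t \mapsto z$ is two-to-one on $\mathbb{F}_q^{\times}$ with multiplicity $1 + \varphi(z^2 - 4\alpha^2)$, and since $q\equiv 1 \pmod 6$ the cubing $t \mapsto t^3$ is three-to-one on $\mathbb{F}_q^{\times}$. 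Decomposing via the cube-indicator $\tfrac13(\varepsilon + \chi_3 + \overline{\chi_3})$, where $\chi_3 = \chi_6^2$, and using the identities $\varphi = \chi_6^3$, $\varphi\chi_3 = \chi_6^{-1}$, and $\varphi\overline{\chi_3} = \chi_6$, the transformed sum splits into pieces featuring exactly the sixth-order characters needed to match the integral representation \eqref{d1.2}.

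The principal obstacle is the final bookkeeping step: the two-to-one substitution contributes an auxiliary character sum $\sum_z \varphi(z^2 - 4\alpha^2)\varphi(z^3+pz+q)$ over a quintic polynomial, while the cubic decomposition splits the main term into three parts indexed by $\varepsilon, \chi_3, \overline{\chi_3}$. Demonstrating that these pieces reassemble precisely into the claimed $_2F_1$ with prefactor $q\,\varphi(-3)$, and that the argument $-27\mu/4$ emerges cleanly from the explicit values of $\alpha^3$ and $q$, will require careful Jacobi-sum manipulations; I expect passing through the binomial-coefficient form \eqref{Greene-def-3} of Greene's series at the final stage to make the identification transparent.
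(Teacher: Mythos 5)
Your reduction step coincides exactly with the paper's: the shift $x\mapsto x+h$ uses the hypothesis on $h$ to kill the linear coefficient, giving $\psi_{(1,1,1)}(a,b,c)=\sum_{y}\varphi(y^3+dy^2+e)=\#\{(x,y):y^2=x^3+dx^2+e\}-q$. At that point the paper simply invokes Theorem \ref{bt2} with $d=3$ (so $\eta$ has order $6$, the $\rho$-parameters are vacuous, and $\alpha=\frac{3^3e}{2^2d^3}$), which is the entire remaining content of the claim. You instead set out to prove the needed evaluation $\sum_{w}\varphi(w^3+w^2+\mu)=q\varphi(-3)\,{_2}F_1\bigl(\chi_6,\chi_6^{-1};\varepsilon\mid-\tfrac{27\mu}{4}\bigr)$ from scratch, and this is where the genuine gap lies. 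The substitution $z=\alpha(t+t^{-1})$ relates $\sum_{t}\varphi(t)\varphi(\alpha^3t^6+qt^3+\alpha^3)$ to $\sum_{z}\bigl(1+\varphi(z^2-4\alpha^2)\bigr)\varphi(z^3+pz+q)$, so recovering the sum you actually want requires evaluating, or showing the cancellation of, the auxiliary sum $\sum_{z}\varphi\bigl((z^2-4\alpha^2)(z^3+pz+q)\bigr)$ --- a genus-two character sum with no evident closed form. You flag this obstacle but supply no mechanism for resolving it, and the concluding remark that the reassembly ``will require careful Jacobi-sum manipulations'' concedes that the theorem's actual content --- the prefactor $q\varphi(-3d)$, the argument $-\tfrac{27e}{4d^3}$, and the fact that the answer is a single ${_2}F_1$ rather than a sum of several Jacobi-sum terms --- is left unproved.

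The repair is short and is what the paper does: after your (correct) reduction, quote Theorem \ref{bt2} with $d=3$, whose proof runs through Greene's binomial expansion (Lemma \ref{l1.4}) and character orthogonality rather than a Tschirnhaus substitution, and which hands you the stated ${_2}F_1$ directly. Two loose ends would remain even then: the degenerate case $d=0$, which you mention but do not treat (the rescaling $y=dw$ and the formula itself both break down there), and the fact that Theorem \ref{bt2} is stated under $q\equiv 1\pmod{2d(d-1)}=12$ while the present theorem assumes only $q\equiv 1\pmod 6$, so one should check that the weaker congruence suffices when $d=3$.
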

\begin{thm}\label{t3.5}
Let $a,b,c\in\mathbb{F}^{\times}_q$ with all distinct and $q\equiv 1 \pmod{6}$. If $h\in \mathbb{F}_q^{\times}$ satisfies $3h^2+2\frac{(ab+bc+ca)}{bc}h+\frac{ab+ca+a^2}{bc}=0$,
then 
 \begin{align*}
\varphi_{(1,1,1)}(a,b,c)=
-1+q\varphi(bc)\varphi(-3f){_2}F_1
\left(
\begin{array}{cc}
T^{\frac{(q-1)}{6}},&T^{\frac{5(q-1)}{6}}\\
&\varepsilon
\end{array}|-\frac{3^3g}{2^2f^3}
\right),
 \end{align*}
where $f=3h+\frac{ab+bc+ca}{bc}$ and $g=h^3+\frac{ab+bc+ca}{bc}h^2+\frac{(ab+ca+a^2)}{bc}h+\frac{a^2}{bc}$
\end{thm}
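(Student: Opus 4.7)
The plan is to reduce the quartic character sum $\varphi_{(1,1,1)}(a,b,c)$ to the cubic sum $\psi_{(1,1,1)}$ already treated in Theorem~\ref{t3.4}, by means of the M\"obius substitution $x\mapsto 1/y$. Since $\varphi(0)=0$, the $x=0$ term of $\varphi_{(1,1,1)}(a,b,c)=\sum_{x}\varphi(x(x+a)(x+b)(x+c))$ vanishes, and for $x\in\mathbb{F}_q^\times$ the substitution $x=1/y$ gives
$$x(x+a)(x+b)(x+c)=\frac{(1+ay)(1+by)(1+cy)}{y^{4}}.$$
Because $\varphi$ is quadratic, $\varphi(y^{-4})=1$ for $y\ne0$, so the sum becomes $\sum_{y\in\mathbb{F}_q^\times}\varphi((1+ay)(1+by)(1+cy))$. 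Completing the sum to all $y$ subtracts $\varphi(1)=1$, and factoring $(1+ay)(1+by)(1+cy)=abc\,(y+1/a)(y+1/b)(y+1/c)$ yields
$$\varphi_{(1,1,1)}(a,b,c)=\varphi(abc)\,\psi_{(1,1,1)}(1/a,1/b,1/c)-1,$$
which already accounts for the isolated $-1$ on the right-hand side of the target formula.

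Next I would apply Theorem~\ref{t3.4} to $\psi_{(1,1,1)}(1/a,1/b,1/c)$. The auxiliary element $h'$ of that theorem satisfies
$$3h'^{2}+2\,\frac{ab+bc+ca}{abc}\,h'+\frac{a+b+c}{abc}=0.$$
The key algebraic observation is that if $h$ is a root of the defining equation in Theorem~\ref{t3.5}, equivalently $3bc\,h^{2}+2(ab+bc+ca)\,h+a(a+b+c)=0$, then $h':=h/a$ is a root of the displayed equation above (multiply through by $a$ after substituting). Under this correspondence a direct computation yields the clean scalings
$$d':=3h'+\tfrac{1}{a}+\tfrac{1}{b}+\tfrac{1}{c}=\frac{f}{a},\qquad e':=h'^{3}+\tfrac{ab+bc+ca}{abc}h'^{2}+\tfrac{a+b+c}{abc}h'+\tfrac{1}{abc}=\frac{g}{a^{3}},$$
where $d'$ and $e'$ are the quantities entering the conclusion of Theorem~\ref{t3.4} for the triple $(1/a,1/b,1/c)$.

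Substituting into Theorem~\ref{t3.4} and simplifying, one finds
$$-\frac{3^{3}\,e'}{2^{2}\,d'^{3}}=-\frac{3^{3}\,g}{2^{2}\,f^{3}},\qquad \varphi(abc)\,\varphi(-3d')=\varphi(abc)\,\varphi(a)\,\varphi(-3f)=\varphi(bc)\,\varphi(-3f),$$
where in the last step $\varphi(a^{2})=1$ is used. Combined with the reduction formula from the first paragraph, this delivers the claimed identity. The main obstacle, aside from this substitution bookkeeping, is verifying the scaling identities $d'=f/a$ and $e'=g/a^{3}$ together with the correspondence $h'=h/a$ between the two defining quadratics; these are routine polynomial manipulations, but must be carried out carefully so that both the argument of the ${_{2}}F_{1}$ and the prefactor land on the expressions stated in the theorem.
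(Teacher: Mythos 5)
Your proof is correct and follows essentially the same route as the paper: both use an inversion substitution to turn the quartic sum $\sum_{x}\varphi\bigl(x(x+a)(x+b)(x+c)\bigr)$ into a cubic character sum of $\psi_{(1,1,1)}$ type (picking up the factor $\varphi(bc)$ and the isolated $-1$ from the $x=0$, resp.\ $y=0$, term), and then feed the resulting cubic into the elliptic-curve point count of Theorem \ref{bt2}. The only organizational difference is that you invoke Theorem \ref{t3.4} as a black box at the triple $(1/a,1/b,1/c)$ and verify the scalings $h'=h/a$, $d'=f/a$, $e'=g/a^{3}$ (all of which check out), whereas the paper substitutes $x\mapsto a/x$ and repeats the depression-to-$y^{2}=x^{3}+fx^{2}+g$ step inline.
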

Finally, we express the character sum $\varphi_{(2, 2, 2)}(a, b, c)$ in terms of sums of Greene's $_2F_1$-hypergeometric series. 
\begin{thm}\label{t3.6}
 Let $a,b,c,d,e,f\in\mathbb{F}^{\times}_q$ be all distinct and $q\equiv 1 \pmod{6}$. If $h\in \mathbb{F}_q^{\times}$ satisfies 
 $3h^2+2\frac{(ab+bc+ca)}{bc}h+\frac{ab+ca+a^2}{bc}=0$ and $3h^2+2(a+b+c)h+ab+bc+ca=0$, then
 \begin{align*}
\psi_{(2,2,2)}(a,b,c)&=-1+
q\varphi(-3d){_2}F_1
\left(
\begin{array}{cc}
T^{\frac{(q-1)}{6}},&T^{\frac{5(q-1)}{6}}\\
&\varepsilon
\end{array}|-\frac{3^3e}{2^2d^3}
\right)\\
&\hspace{1cm} +q\varphi(bc)\varphi(-3f){_2}F_1
\left(
\begin{array}{cc}
T^{\frac{(q-1)}{6}},&T^{\frac{5(q-1)}{6}}\\
&\varepsilon
\end{array}|-\frac{3^3g}{2^2f^3}
\right),
\end{align*}
where $d=3h+a+b+c$, $e=h^3+(a+b+c)h^2+(ab+bc+ca)h+abc$, $f=3h+\frac{ab+bc+ca}{bc}$ and
$g=h^3+\frac{ab+bc+ca}{bc}h^2+\frac{(ab+ca+a^2)}{bc}h+\frac{a^2}{bc}$.
\end{thm}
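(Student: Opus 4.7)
The approach is to reduce $\psi_{(2,2,2)}(a,b,c)$ to the sum $\psi_{(1,1,1)}(a,b,c)+\varphi_{(1,1,1)}(a,b,c)$ and then to quote Theorems \ref{t3.4} and \ref{t3.5} verbatim. The hypothesis furnishes a single $h\in\mathbb{F}_q^{\times}$ satisfying both quadratic relations at once, so the two earlier theorems may be invoked with the same $h$; this is what produces the parameters $d,e,f,g$ in the target formula from a single auxiliary element.

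For the reduction, I would use the standard fiber-count of the squaring map on $\mathbb{F}_q$: each nonzero $y$ has $1+\varphi(y)$ preimages and $y=0$ has a unique preimage. Combined with the convention $\varphi(0)=0$, this yields, for any $f\colon \mathbb{F}_q\to\mathbb{C}$,
\[
\sum_{x\in\mathbb{F}_q} f(x^2)=\sum_{y\in\mathbb{F}_q} f(y)+\sum_{y\in\mathbb{F}_q}\varphi(y)f(y).
\]
Applying this with $f(y)=\varphi(y+a)\varphi(y+b)\varphi(y+c)$, the two resulting sums on the right are precisely $\psi_{(1,1,1)}(a,b,c)$ and $\varphi_{(1,1,1)}(a,b,c)$, giving
\[
\psi_{(2,2,2)}(a,b,c)=\psi_{(1,1,1)}(a,b,c)+\varphi_{(1,1,1)}(a,b,c).
\]

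After this step, I would substitute the closed-form expression for $\psi_{(1,1,1)}(a,b,c)$ provided by Theorem \ref{t3.4} (which supplies the first $_2F_1$-term, in $d$ and $e$) together with that for $\varphi_{(1,1,1)}(a,b,c)$ from Theorem \ref{t3.5} (which supplies the second $_2F_1$-term, in $f$ and $g$, together with the additive constant $-1$). Since both theorems apply with the same $h$ by hypothesis, no separate specialization is required; adding the two formulas reproduces the stated identity term-for-term, with the lone $-1$ inherited from Theorem \ref{t3.5}.

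The real content lies entirely in the reduction: once the squaring-map identity has been correctly set up, the remainder is substitution. Consequently the main, and essentially the only, obstacle is the clean book-keeping of the fiber-count identity and a careful check that the simultaneous conditions on $h$ legitimize the joint application of Theorems \ref{t3.4} and \ref{t3.5}.
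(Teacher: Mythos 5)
Your proposal is correct and follows essentially the same route as the paper: the paper's proof also reduces $\psi_{(2,2,2)}(a,b,c)$ to $\psi_{(1,1,1)}(a,b,c)+\varphi_{(1,1,1)}(a,b,c)$ via Lemma \ref{l1.1} (whose content is exactly your squaring-map fiber count) and then substitutes Theorems \ref{t3.4} and \ref{t3.5}. The only difference is that you re-derive the decomposition identity from scratch rather than citing the lemma.
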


\section{Preliminaries}
In this section we recall some results and prove some properties of the character sums $\psi_{(l, m, n)}(a, b, c)$ and $\phi_{(l, m, n)}(a, b, c)$ which will be 
used to prove the main results. 
\par 
Let $\delta$ denote the function on multiplicative characters defined by
$$\delta(A)=\left\{
              \begin{array}{ll}
                1, & \hbox{if $A$ is the trivial character;} \\
                0, & \hbox{otherwise.}
              \end{array}
            \right.
$$
We also denote by $\delta$ the function defined on $\mathbb{F}_q$ by 
$$\delta(x)=\left\{
              \begin{array}{ll}
                1, & \hbox{if $x=0$;} \\
                0, & \hbox{if $x\neq 0$.}
              \end{array}
            \right.
$$
\begin{lem}\cite{greene1987hypergeometric} \label{l1.4}
 Let $A$ be a multiplicative character and $a,x\in\mathbb{F}_q$. We have
 \begin{equation}
 A(a+x)=\delta(x)+\frac{qA(a)}{q-1}\sum_{\chi\in\widehat{\mathbb{F}}_q}{{A}\choose{\chi}}\chi\left(\frac{x}{a}\right).
\end{equation}
\end{lem}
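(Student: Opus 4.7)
The plan is to derive the formula via multiplicative Fourier inversion on $\mathbb{F}_q^{\times}$, after reducing to the normalized case $a=1$. Since $a\in\mathbb{F}_q^{\times}$ is implicit (so that $x/a$ is defined), I factor
\[ A(a+x)=A(a)\,A(1+x/a) \]
and substitute $y=x/a$. Because $\delta(x/a)=\delta(x)$ for $a\ne 0$, the claim reduces to establishing the normalized identity
\[ A(1+y)=\delta(y)+\frac{q}{q-1}\sum_{\chi}{A\choose\chi}\chi(y), \]
after which one multiplies through by $A(a)$.

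For this normalized identity I first treat $y\ne 0$, where the $\delta$-term is inert (since $\chi(0)=0$ for every $\chi\in\widehat{\mathbb{F}_q^\times}$ by convention, including the trivial one). The identity then reduces to a standard multiplicative Fourier expansion of the function $f(y)=A(1+y)$ on $\mathbb{F}_q^{\times}$. By orthogonality of characters,
\[ f(y)=\frac{1}{q-1}\sum_{\chi}\widehat{f}(\chi)\,\chi(y),\qquad \widehat{f}(\chi)=\sum_{z\in\mathbb{F}_q^{\times}}f(z)\,\overline{\chi}(z). \]
I evaluate the Fourier coefficient via the substitution $w=1+z$:
\[ \widehat{f}(\chi)=\sum_{w}A(w)\overline{\chi}(w-1)=\overline{\chi}(-1)\sum_{w}A(w)\overline{\chi}(1-w)=\chi(-1)\,J(A,\overline{\chi}), \]
and this equals $q{A\choose\chi}$ by the definition ${A\choose\chi}=\frac{\chi(-1)}{q}J(A,\overline{\chi})$ recalled in the introduction. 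This establishes the identity on $\mathbb{F}_q^{\times}$.

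The boundary case $y=0$ is immediate: $A(1+0)=1$, while on the right-hand side $\delta(0)=1$ and the character sum vanishes term by term. Reinstating $y=x/a$ and multiplying through by $A(a)$ yields the stated form of the lemma.

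The only real obstacle here is bookkeeping of the conventions $\chi(0)=0$ and the extension of characters by zero to all of $\mathbb{F}_q$: these force precisely the $\delta$-correction on the right-hand side, and they must be tracked consistently through the Fourier inversion step. In particular, the would-be boundary term at $z=1$ in the Jacobi-sum-type expression contributes zero because $\overline{\chi}(0)=0$, so one may sum over $\mathbb{F}_q^{\times}$ or over all of $\mathbb{F}_q$ interchangeably where convenient, which is what makes the computation of $\widehat{f}(\chi)$ collapse cleanly onto the Jacobi sum.
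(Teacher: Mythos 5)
The paper does not actually prove this lemma; it is imported verbatim from Greene, so there is no in-paper argument to set yours against. Your proof by multiplicative Fourier inversion is the standard route and is essentially correct: the orthogonality step, the evaluation $\widehat f(\chi)=\chi(-1)J(A,\overline{\chi})=q\binom{A}{\chi}$ via the substitution $w=1+z$, and the bookkeeping of the convention $\chi(0)=0$ (which lets you pass freely between sums over $\mathbb{F}_q^{\times}$ and over all of $\mathbb{F}_q$) are all handled properly, and the normalized identity $A(1+y)=\delta(y)+\frac{q}{q-1}\sum_{\chi}\binom{A}{\chi}\chi(y)$ is correctly established for every $y$. The one point you should not gloss over is the very last step: multiplying the normalized identity through by $A(a)$ multiplies the $\delta$-term as well, so what your argument actually delivers is
\[
A(a+x)=A(a)\,\delta(x)+\frac{qA(a)}{q-1}\sum_{\chi}\binom{A}{\chi}\chi\!\left(\frac{x}{a}\right),
\]
which coincides with the printed statement only when $x\neq 0$ or $A(a)=1$ (at $x=0$ the printed version would assert $A(a)=1$). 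This is a defect of the statement as transcribed from Greene's $a=1$ case rather than of your argument --- and it is harmless in this paper, since every application multiplies the $\delta$-term by a factor that vanishes at $x=0$ --- but you should state that your derivation yields the corrected form rather than claim it reproduces the printed one.
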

We recall the following lemma from \cite{sadek2016jacobs}.
\begin{lem}\cite{sadek2016jacobs}\label{l1.1}
Let $f$ be a function from $\mathbb{F}_q$ to $\mathbb{C}$. We have
\begin{equation*}
\sum_{x\in\mathbb{F}_q}\varphi(x)f(x)=\sum_{x\in\mathbb{F}_q}f(x^2)-\sum_{x\in\mathbb{F}_q}f(x). 
\end{equation*}
In particular, we have $\psi_{(2m,2n)}(a,b)=\psi_{(m,n)}(a,b)+\varphi_{(m,n)}(a,b)$, and $\psi_{(2l,2m,2n)}(a,b,c)=\psi_{(l,m,n)}(a,b,c)
+\varphi_{(l,m,n)}(a,b,c)$.
\end{lem}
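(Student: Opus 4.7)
The plan is to prove the master identity by splitting each side into contributions from $x=0$, the nonzero squares, and the nonzero non-squares of $\mathbb{F}_q$. Two elementary facts make this partition effective: since $q$ is odd, $\varphi$ takes the value $+1$ on nonzero squares, $-1$ on nonzero non-squares, and (by the paper's convention) $0$ at $0$; and the squaring map $x\mapsto x^{2}$ is $2$-to-$1$ from $\mathbb{F}_q^{\times}$ onto the set $(\mathbb{F}_q^{\times})^{2}$ of nonzero squares, with $0$ as the unique preimage of $0$.

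Concretely, I would first use the three-case description of $\varphi$ to evaluate
$$\sum_{x\in\mathbb{F}_q}\bigl(1+\varphi(x)\bigr)f(x)\;=\;f(0)\,+\,2\sum_{y\in(\mathbb{F}_q^{\times})^{2}}f(y),$$
since the weight $1+\varphi(x)$ is $2$ on nonzero squares, $0$ on nonzero non-squares, and $1$ at $x=0$. Then I would compute the right-hand side $\sum_{x\in\mathbb{F}_q}f(x^{2})$ by reindexing via the squaring map: the term $x=0$ contributes $f(0)$, and for each nonzero square $y$ the two distinct square roots $\pm\sqrt{y}$ both contribute $f(y)$. Hence $\sum_{x}f(x^{2})$ equals the same expression $f(0)+2\sum_{y\in(\mathbb{F}_q^{\times})^{2}}f(y)$, so setting the two evaluations equal and subtracting $\sum_{x}f(x)$ produces the stated identity $\sum_{x}\varphi(x)f(x)=\sum_{x}f(x^{2})-\sum_{x}f(x)$.

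For the ``in particular'' statements I would specialize. Applying the master identity to $f(x)=\varphi(x^{m}+a)\varphi(x^{n}+b)$, the definitions of $\psi_{(m,n)}$, $\psi_{(2m,2n)}$ and $\varphi_{(m,n)}$ give $\sum_{x}f(x^{2})=\psi_{(2m,2n)}(a,b)$, $\sum_{x}f(x)=\psi_{(m,n)}(a,b)$ and $\sum_{x}\varphi(x)f(x)=\varphi_{(m,n)}(a,b)$; rearranging yields $\psi_{(2m,2n)}(a,b)=\psi_{(m,n)}(a,b)+\varphi_{(m,n)}(a,b)$. The three-variable assertion is the same argument applied verbatim to $f(x)=\varphi(x^{l}+a)\varphi(x^{m}+b)\varphi(x^{n}+c)$. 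There is no real obstacle here; the only subtle point is the bookkeeping at $x=0$, where the convention $\varphi(0)=0$ leaves precisely the ``$1$'' in $1+\varphi(x)$ to match the unique preimage of $0$ under squaring.
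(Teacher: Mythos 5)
Your proof is correct, and it is the standard argument: the paper itself gives no proof of this lemma (it is quoted from Sadek's paper), but your partition of $\mathbb{F}_q$ into $0$, nonzero squares, and nonzero non-squares is just a rephrasing of the usual observation that $1+\varphi(y)$ counts the square roots of $y$, so that $\sum_x f(x^2)=\sum_y (1+\varphi(y))f(y)$. The specializations to $f(x)=\varphi(x^m+a)\varphi(x^n+b)$ and $f(x)=\varphi(x^l+a)\varphi(x^m+b)\varphi(x^n+c)$ are handled correctly, including the bookkeeping at $x=0$ under the convention $\varphi(0)=0$.
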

In the following lemma, we link the character sum to Jacobi sum. 
\begin{lem}\label{c1.4}
Let $a\in\mathbb{F}^{\times}_q$, and $\psi,\chi$ and $\rho$ be characters on $\mathbb{F}_q$. Then
\begin{align*}
\sum_{x\in\mathbb{F}_q}\psi(2x^2)\chi(x^2)\rho(1+ax^2)=\psi(-1)\psi\chi(a^{-1})J(\psi,\chi,\rho)+\psi(-1)\psi\chi\varphi(a^{-1})
J(\psi,\chi\varphi,\rho),
\end{align*}
where $J(\psi,\chi,\rho)$ is the Jacobi sum.
\end{lem}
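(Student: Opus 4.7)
The plan is to reduce the sum over squared arguments to a linear character sum via the standard squaring identity, and then to recognize the resulting expressions as Jacobi sums after a bijective change of variables.

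First I would use the factorization $\psi(2x^{2})\chi(x^{2})=\psi(2)(\psi\chi)(x^{2})$ and apply the identity
\[
\sum_{x\in\mathbb{F}_q} g(x^{2})=\sum_{y\in\mathbb{F}_q}g(y)+\sum_{y\in\mathbb{F}_q}\varphi(y)g(y),
\]
which is an immediate consequence of Lemma~\ref{l1.1} (take $f(y)=g(y)$ and note that $\sum_x g(x^{2})=\sum_x g(x)+\sum_x \varphi(x)g(x)$), to the function $g(y)=(\psi\chi)(y)\,\rho(1+ay)$. This produces
\[
\sum_{x\in\mathbb{F}_q}\psi(2x^{2})\chi(x^{2})\rho(1+ax^{2})=\psi(2)\sum_{y\in\mathbb{F}_q}(\psi\chi)(y)\rho(1+ay)+\psi(2)\sum_{y\in\mathbb{F}_q}(\psi\chi\varphi)(y)\rho(1+ay).
\]

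Next I would perform the bijective change of variables $y\mapsto -y/a$ in each of the two sums, which converts $1+ay$ into $1-y$. Multiplicativity of characters then pulls out the constant $(\psi\chi)(-a^{-1})=(\psi\chi)(-1)(\psi\chi)(a^{-1})$ in the first sum (respectively $(\psi\chi\varphi)(-a^{-1})$ in the second), leaving the two-variable Jacobi sum $J(\psi\chi,\rho)=\sum_y(\psi\chi)(y)\rho(1-y)$ in each term.

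To complete the identification, I would invoke the factorization of the generalized Jacobi sum $J(\psi,\chi,\rho):=J_{1}(\psi,\chi,\rho)$ defined earlier in the paper, namely $J(\psi,\chi,\rho)=J(\psi,\chi)\,J(\psi\chi,\rho)$ when $\psi\chi$ is nontrivial, and analogously $J(\psi,\chi\varphi,\rho)=J(\psi,\chi\varphi)\,J(\psi\chi\varphi,\rho)$. This reexpresses $J(\psi\chi,\rho)$ and $J(\psi\chi\varphi,\rho)$ in terms of the three-character Jacobi sums appearing on the right.

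The main obstacle is the bookkeeping of constants in this last step: the change of variables gives a prefactor of $\psi(2)(\psi\chi)(-1)=\psi(-1)\psi(2)\chi(-1)$ in front of $J(\psi\chi,\rho)$, whereas the statement demands only $\psi(-1)$ in front of $J(\psi,\chi,\rho)$. Absorbing the discrepancy requires the Gauss-sum identity $J(\psi,\chi)=g(\psi)g(\chi)/g(\psi\chi)$ together with the sign identity $g(\chi)g(\overline{\chi})=\chi(-1)\,q$, read in the convention for the generalized Jacobi sum adopted by the authors. Finally, one must isolate the degenerate cases in which $\psi\chi$ (or $\psi\chi\varphi$) is trivial, since there the factorization $J(\psi,\chi,\rho)=J(\psi,\chi)J(\psi\chi,\rho)$ fails and the corresponding Jacobi sums have to be evaluated by a separate direct computation.
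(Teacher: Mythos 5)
Your first two steps are precisely the paper's own argument: the paper sets $f(x)=\psi(2x)\chi(x)\rho(1+ax)$ in Lemma~\ref{l1.1} to split $\sum_x f(x^2)$ into $\sum_x f(x)+\sum_x\varphi(x)f(x)$ and then substitutes $x\mapsto -a^{-1}x$ in each piece; your version with $g(y)=(\psi\chi)(y)\rho(1+ay)$ is the same computation, and both calculations land on
\[
\psi(-1)\psi\chi(a^{-1})\sum_{x}\psi(2x)\chi(-x)\rho(1-x)+\psi(-1)\psi\chi\varphi(a^{-1})\sum_{x}\psi(2x)\chi\varphi(-x)\rho(1-x),
\]
which, after pulling out constants, is your expression $\psi(2)\,\psi\chi(-1)\,\psi\chi(a^{-1})J(\psi\chi,\rho)+\psi(2)\,\psi\chi\varphi(-1)\,\psi\chi\varphi(a^{-1})J(\psi\chi\varphi,\rho)$. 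Up to this point everything you write is correct.

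Your final step, however, is a genuine gap. The factorization $J(\psi,\chi,\rho)=J(\psi,\chi)J(\psi\chi,\rho)$ of the generalized Jacobi sum introduces a factor $J(\psi,\chi)$ of absolute value $\sqrt{q}$ for generic characters, so no Gauss-sum sign bookkeeping can convert your prefactor $\psi(2)\psi\chi(-1)$ (a root of unity) into $J(\psi,\chi)$: the proposed right-hand side would generically have magnitude $q$ while your computed expression has magnitude $O(\sqrt{q})$. The discrepancy you ran into is not something your argument can or should absorb. The paper's proof performs no such reconciliation; it simply stops at the one-variable sum $\sum_x\psi(2x)\chi(-x)\rho(1-x)$ and calls it $J(\psi,\chi,\rho)$ ``from the definition of the Jacobi sum''. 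If $J(\psi,\chi,\rho)$ is read as the generalized Jacobi sum $J_1(\psi,\chi,\rho)$ defined in the introduction (a sum over the plane $c_1+c_2+c_3=1$), the identity as stated cannot hold for the same magnitude reason; the only reading consistent with the computation is that $J(\psi,\chi,\rho)$ abbreviates the single sum $\sum_x\psi(2x)\chi(-x)\rho(1-x)=\psi(2)\chi(-1)J(\psi\chi,\rho)$. The correct course is therefore to stop where your calculation naturally ends and record the answer in terms of the two-character Jacobi sums $J(\psi\chi,\rho)$ and $J(\psi\chi\varphi,\rho)$ with their explicit root-of-unity prefactors, noting that this is what the lemma's notation must mean, rather than attempting the Gauss-sum conversion; your discussion of the degenerate cases where $\psi\chi$ is trivial also becomes moot once that unworkable factorization step is dropped.
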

\begin{proof}
By setting $f(x)=\psi(2x)\chi(x)\rho(1+ax)$ in Lemma \ref{l1.1}, we have
\begin{equation*}
\sum_{x\in\mathbb{F}_q}\psi(2x^2)\chi(x^2)\rho(1+ax^2)=\sum_{x\in\mathbb{F}_q}\psi(2x)\chi(x)\rho(1+ax)
+\sum_{x\in\mathbb{F}_q}\varphi(x)\psi(2x)\chi(x)\rho(1+ax).
\end{equation*}
Now using the bijective transformation $x\mapsto-a^{-1}x$, we have
\begin{align*}
\sum_{x\in\mathbb{F}_q}\psi(2x^2)\chi(x^2)\rho(1+ax^2)=&\psi(-1)\psi\chi(a^{-1})\sum_{x\in\mathbb{F}_q}\psi(2x)\chi(-x)\rho(1-x)\\
 +&\psi(-1)\psi\chi\varphi(a^{-1})\sum_{x\in\mathbb{F}_q}\psi(2x)\chi\varphi(-x)\rho(1-x).
\end{align*}
From the definition of the Jacobi sum, we have the desired result.
\end{proof} 
\begin{lem}\label{c15}
 Let $\psi,\chi,\rho$ be characters on $\mathbb{F}_q$ with $q\equiv 1 \pmod{4}$ and 
$\chi_{4}$ be a character of order $4$. Then we have
\begin{equation*}
 \sum_{x\in\mathbb{F}_q}\psi(2x^4)\chi(x^4)\rho(1+ax^4)=\psi(-1)\sum_{k=0}^{3}\psi\chi\chi^{k}_{4}(a^{-1})J(\psi,\chi\chi^{k}_{4},\rho).
\end{equation*}
\end{lem}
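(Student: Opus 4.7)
The plan is to reduce Lemma \ref{c15} to the already-proved Lemma \ref{c1.4} by peeling off a single layer of squaring via Lemma \ref{l1.1}. The crucial observation is that since $q \equiv 1 \pmod{4}$, a character $\chi_4$ of order $4$ exists on $\mathbb{F}_q^{\times}$ and satisfies $\chi_4^2 = \varphi$; in particular $\varphi(x) = \chi_4(x^2)$ for every $x \in \mathbb{F}_q$. This identity lets me absorb the factor $\varphi(x)$ produced by Lemma \ref{l1.1} into the factor $\chi(x^2)$, preserving the exact shape required to invoke Lemma \ref{c1.4} a second time.

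Concretely, I would set $f(y) := \psi(2y^2)\chi(y^2)\rho(1+ay^2)$, so that $f(y^2)$ is exactly the summand on the left-hand side. Applying Lemma \ref{l1.1} then gives
\[
\sum_{x\in\mathbb{F}_q}\psi(2x^4)\chi(x^4)\rho(1+ax^4)
= \sum_{x}\psi(2x^2)\chi(x^2)\rho(1+ax^2) + \sum_{x}\psi(2x^2)(\chi\chi_4)(x^2)\rho(1+ax^2),
\]
where in the second sum I have used $\varphi(x)\chi(x^2) = \chi_4(x^2)\chi(x^2) = (\chi\chi_4)(x^2)$. Both sums are now in the precise form to which Lemma \ref{c1.4} applies.

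I would then apply Lemma \ref{c1.4} to each sum: directly to the first (producing the two contributions indexed by $\chi\chi_4^0 = \chi$ and $\chi\chi_4^2 = \chi\varphi$), and with $\chi$ replaced by $\chi\chi_4$ to the second (producing the two contributions indexed by $\chi\chi_4^1 = \chi\chi_4$ and $\chi\chi_4\cdot\varphi = \chi\chi_4^3$). All four coefficients take the form $\psi(-1)\psi\chi\chi_4^k(a^{-1})$ and all four Jacobi sums are $J(\psi,\chi\chi_4^k,\rho)$, so summing over $k=0,1,2,3$ yields the claimed identity. No substantial obstacle is expected; the only delicate point is the bookkeeping that the four indices $k\in\{0,1,2,3\}$ are each hit exactly once, which follows from the partition $\{0,2\}\sqcup\{1,3\}$ induced by the two applications of Lemma \ref{c1.4} together with the relations $\varphi=\chi_4^2$ and $\chi_4\cdot\varphi=\chi_4^3$. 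The hypothesis $q\equiv 1\pmod{4}$ enters solely to guarantee the existence of $\chi_4$.
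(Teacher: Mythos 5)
Your proof is correct and takes essentially the same route as the paper's: apply Lemma \ref{l1.1} to $f(y)=\psi(2y^2)\chi(y^2)\rho(1+ay^2)$, absorb the resulting factor $\varphi(x)$ into the second sum via $\varphi=\chi_4^2$, and then invoke Lemma \ref{c1.4} twice to collect the four terms $k=0,2$ and $k=1,3$. Your write-up is in fact more explicit about this index bookkeeping than the paper, which states the same steps tersely.
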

\begin{proof}
Lemma \ref{l1.1} yields
\begin{align*}
 \sum_{x\in\mathbb{F}_q}\psi(2x^4)\chi(x^4)\rho(1+ax^4)&=\sum_{x\in\mathbb{F}_q}\psi(2x^2)\chi(x^2)\rho(1+ax^2)
 +\sum_{x\in\mathbb{F}_q}\varphi(x)\psi(2x^2)\chi(x^2)\rho(1+ax^2)\\
 &=\sum_{x\in\mathbb{F}_q}\psi(2x^2)\chi(x^2)\rho(1+ax^2)+\sum_{x\in\mathbb{F}_q}\psi(2x^2)\chi_4\chi(x^2)\rho(1+ax^2).
\end{align*}
Now using Lemma \ref{c1.4}, we have
\begin{align*}
\sum_{x\in\mathbb{F}_q}\psi(2x^4)\chi(x^4)\rho(1+ax^4)=\psi(-1)\sum_{k=0}^{3}\psi\chi\chi^{k}_{4}(a^{-1})J(\psi,\chi\chi^{k}_{4},\rho).
\end{align*}
\end{proof}
Now, we prove the next result by using induction.
\begin{lem}
Let $a\in\mathbb{F}_q^{\times}$ and $m=2^n$, where $n\in\mathbb{N}$ such that $q\equiv 1 \pmod{m}$. Let $\psi,\chi,\rho$ be characters on $\mathbb{F}_q$, and 
$\chi_{m}$ be a character of order $m$ on $\mathbb{F}_q$. Then we have
\begin{align*}
\sum_{x\in\mathbb{F}_q}\psi(2x^m)\chi(x^m)\rho(1+ax^m)=\psi(-1)\sum_{k=0}^{m-1}\psi\chi\chi^{k}_{m}(a^{-1})J(\psi,\chi\chi^{k}_{m},\rho). 
\end{align*}
\end{lem}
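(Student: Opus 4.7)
The natural approach is induction on $n$, where $m=2^n$. The base case $n=1$ is exactly Lemma \ref{c1.4} (and $n=2$ is Lemma \ref{c15}, supplying a useful check). So assume the identity holds for $m=2^n$ and let us establish it for $2m=2^{n+1}$.

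The key device for bumping the exponent from $m$ to $2m$ is Lemma \ref{l1.1}, applied to $f(x)=\psi(2x^m)\chi(x^m)\rho(1+ax^m)$. Rearranging that lemma gives
\begin{align*}
\sum_{x\in\mathbb{F}_q}\psi(2x^{2m})\chi(x^{2m})\rho(1+ax^{2m})
&=\sum_{x\in\mathbb{F}_q}\psi(2x^m)\chi(x^m)\rho(1+ax^m)\\
&\hspace{1cm}+\sum_{x\in\mathbb{F}_q}\varphi(x)\psi(2x^m)\chi(x^m)\rho(1+ax^m).
\end{align*}
Because $\chi_{2m}$ has exact order $2m$, the character $\chi_{2m}^m$ is the (unique) quadratic character, i.e.\ $\varphi(x)=\chi_{2m}^m(x)=\chi_{2m}(x^m)$. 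Hence $\varphi(x)\chi(x^m)=(\chi\chi_{2m})(x^m)$, and the second sum on the right is $\sum_{x}\psi(2x^m)(\chi\chi_{2m})(x^m)\rho(1+ax^m)$. Both sums are therefore of the form covered by the inductive hypothesis with exponent $m$.

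Applying that hypothesis once with character $\chi$ and once with character $\chi\chi_{2m}$ in the role of $\chi$, and using $\chi_m=\chi_{2m}^2$ so that $\chi_m^k=\chi_{2m}^{2k}$, the first sum produces the terms indexed by even powers of $\chi_{2m}$,
\begin{align*}
\psi(-1)\sum_{k=0}^{m-1}\psi\chi\chi_{2m}^{2k}(a^{-1})J(\psi,\chi\chi_{2m}^{2k},\rho),
\end{align*}
while the second produces the terms indexed by odd powers,
\begin{align*}
\psi(-1)\sum_{k=0}^{m-1}\psi\chi\chi_{2m}^{2k+1}(a^{-1})J(\psi,\chi\chi_{2m}^{2k+1},\rho).
\end{align*}
Summing these two contributions amalgamates the exponents $0,1,2,\dots,2m-1$, giving exactly $\psi(-1)\sum_{k=0}^{2m-1}\psi\chi\chi_{2m}^{k}(a^{-1})J(\psi,\chi\chi_{2m}^{k},\rho)$, which is the desired identity for $2m$.

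The only real subtlety is the character-theoretic bookkeeping in the last step: one must be careful that $\chi_{2m}^2$ is indeed a character of order $m$ and may legitimately be taken as the $\chi_m$ of the inductive hypothesis, and that $\chi_{2m}^m=\varphi$. Once those identifications are in place, the re-indexing of even and odd exponents is routine, and the induction closes.
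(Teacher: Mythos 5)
Your proof is correct and follows exactly the route the paper intends: induction on $n$ with base case Lemma \ref{c1.4}, using Lemma \ref{l1.1} to pass from exponent $m$ to $2m$ and the identification $\varphi=\chi_{2m}^{m}$, $\chi_m=\chi_{2m}^{2}$ to merge the even- and odd-indexed terms. The paper merely asserts "by induction" from Lemmas \ref{c1.4} and \ref{c15}; you have supplied the details it omits, and they check out.
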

\begin{proof}
In view of Lemma \ref{c1.4} and Lemma \ref{c15}, we will prove this lemma by induction method. 
\end{proof}
In the following proposition we prove some basic properties of the character sums\\
$\psi_{(l_1,\,l_2,\,\ldots,\,l_n)}(a_1,\,a_2,\,\ldots,a_n)$ and $\varphi_{(l_1,\,l_2,\,\ldots,\,l_n)}(a_1,\,a_2,\,\ldots,\,a_n)$.  
\begin{prop} For $a_1,a_2,\ldots,a_n\in\mathbb{F}^{\times}_q$, we have 
\begin{enumerate}
\item Let $l_1,l_2,\ldots,l_n$ be non negative integer, then
$
\psi_{(l_{1},\,l_{2},\,\ldots,\,l_{n})}(a_{1},\,a_{2},\ldots,\,a_{n})=\\
\psi_{(l_{2},\,l_{3},\,\ldots,\,l_{1})}(a_{2},\,a_{3},\,\ldots,\,a_{1})=\ldots=\psi_{(l_{n},\,l_{1},\,\ldots,\,l_{(n-1)})}(a_{n},\,a_{1},\,\ldots,\,a_{n-1});\\
\varphi_{(l_{1},\,l_{2},\,\ldots,\,l_{n})}(a_{1},\,a_{2},\,\ldots,\,a_{n})=
\psi_{(l_{2},\,l_{3},\,\ldots,\,l_{1})}(a_{2},\,a_{3},\,\ldots,\,a_{1})=\\
\cdots=\psi_{(l_{n},\,l_{1},\,\ldots,\,l_{(n-1)})}(a_{n},\,a_{1},\,\ldots,\,a_{n-1}).\\
$
\item Let $I=\{i_1,i_2,\ldots,i_k\}\subset\{1,2,\ldots,n\}$. If $l_r=0$ for all $r\in I$ then
$
\psi_{(l_{1},\ldots,l_{n})}(a_{1},\ldots,a_{n})=\prod_{r\in I}\varphi(1+a_r)\psi_{(l_{1},
\ldots,\hat{l}_{i_1},\ldots,\hat{l}_{i_{k}},\ldots l_{n})}(a_{1},\ldots,\hat{a}_{i_1},\ldots,\hat{a}_{i_{k}},\ldots,a_{n});\\
\varphi_{(l_{1},\ldots,l_{n})}(a_{1},\ldots,a_{n})=\prod_{r\in I}\varphi(1+a_r)\varphi_{(l_{1},
\ldots,\hat{l}_{i_1},\ldots,\hat{l}_{i_{k}},\ldots l_{n})}(a_{1},\ldots,\hat{a}_{i_1},\ldots,\hat{a}_{i_{k}},\ldots,a_{n}),
$
where $\hat{l}_{i_{k}}$ represents the absentia of $l_{i_{k}}$ in $\{l_{1},\ldots,l_{i_1},\ldots,l_{i_{k}},
\ldots l_{n}\}$.\\
\item
$
 \psi_{(l_{1},\,l_{2},\,\ldots,l_{n})}(a^{l_1}_{1},\,a_{2},\,\ldots,a_{n})=\varphi(a^{l_1+l_2+\cdots+l_n})
 \psi_{(l_{1},\,l_{2},\,\ldots,l_{n})}(1,\frac{a_2}{a_{1}^{l_2}},\ldots\frac{a_n}{a_{1}^{l_n}});\\
 \varphi_{(l_{1},\,l_{2},\,\ldots,l_{n})}(a^{l_1}_{1},\,a_{2},\,\ldots,a_{n})=\varphi(a^{l_1+l_2+\cdots+l_n+1})
 \varphi_{(l_{1},\,l_{2},\,\ldots,l_{n})}(1,\frac{a_2}{a_{1}^{l_2}},\ldots\frac{a_n}{a_{1}^{l_n}}).\\
 $
 \item If $l_1=l_2=\cdots=l_n=l$ and $a_1=a_2=\cdots=a_n=a$, then
\[
 \psi_{(l,\,l,\,\ldots,\,l)}(a,\,a,\,\ldots,\,a)=\varphi_{(l,\,l,\,\ldots,\,l)}(a,\,a,\,\ldots,\,a)=
 \begin{cases}
\psi_{l}(a), &\mbox{if n is odd};\\
q-1, &\mbox{if n is even}.
\end{cases}
\]
\end{enumerate}
\end{prop}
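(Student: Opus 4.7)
The plan is to treat each of the four parts by direct manipulation of the defining sums \eqref{e1.3} and \eqref{e1.4}, since the identities are essentially combinatorial rearrangements of a single product of character values. For part (1), I would simply observe that the summand of $\psi_{(l_1,\ldots,l_n)}$ (and of $\varphi_{(l_1,\ldots,l_n)}$) is a product of the factors $\varphi(x^{l_i}+a_i)$, together with, in the $\varphi$-case, an extra $\varphi(x)$ that is unchanged under permutations of the indices. Commutativity of multiplication in $\mathbb{C}$ means that any permutation of the pairs $(l_i,a_i)$ preserves the summand, and therefore the whole sum; the cyclic shifts claimed are a special case.

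For part (2), the point is that if $l_r=0$ then $\varphi(x^{l_r}+a_r)=\varphi(1+a_r)$ is a constant independent of $x$, provided one adopts the standard convention $0^{0}=1$ so the identity holds uniformly at $x=0$. That factor therefore pulls out of the $x$-sum as $\varphi(1+a_r)$, and iterating over the finite index set $I$ produces the claimed product formula for both $\psi$ and $\varphi$. For part (3), I would make the substitution $x\mapsto a_1 x$, which is a bijection of $\mathbb{F}_q$ since $a_1\in\mathbb{F}_q^{\times}$. For the first factor one has $\varphi(a_1^{l_1}x^{l_1}+a_1^{l_1})=\varphi(a_1^{l_1})\varphi(x^{l_1}+1)$, and for $i\ge 2$ one has $\varphi(a_1^{l_i}x^{l_i}+a_i)=\varphi(a_1^{l_i})\varphi(x^{l_i}+a_i/a_1^{l_i})$, using multiplicativity of $\varphi$ on $\mathbb{F}_q^{\times}$. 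Collecting the constants $\varphi(a_1^{l_i})$ yields the overall prefactor $\varphi(a_1^{l_1+\cdots+l_n})$ claimed in the $\psi$-identity. The $\varphi$-version picks up one extra $\varphi(a_1)$ from $\varphi(a_1 x)=\varphi(a_1)\varphi(x)$, raising the exponent of $a_1$ by one.

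Part (4) rests on the observation that $\varphi$ takes values in $\{-1,0,1\}$, so $\varphi(y)^n=\varphi(y)$ if $n$ is odd and $\varphi(y)^n=\varepsilon(y)$ if $n$ is even, under the convention $\varepsilon(0)=0$. Setting $l_i=l$ and $a_i=a$, the product collapses: for $n$ odd, the sum defining $\psi_{(l,\ldots,l)}$ reduces to $\sum_x \varphi(x^l+a)=\psi_l(a)$, and a parallel collapse handles $\varphi_{(l,\ldots,l)}$; for $n$ even, each of the two sums reduces to a sum of indicators of $x^l+a\neq 0$. The main obstacle I expect is this even case: to confirm the clean value $q-1$ one has to track exactly how many $x\in\mathbb{F}_q$ satisfy $x^l+a=0$ and, for the $\varphi$-version, the contribution of the extra $\varphi(x)$ factor. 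I would therefore handle separately the $x=0$ term and the roots of $x^l=-a$, and check that in the setting implicitly required by the statement these exceptional contributions combine to give precisely $q-1$.
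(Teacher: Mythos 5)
Your treatment of parts (1)--(3) is correct and is essentially what the paper does: (1) and (2) are immediate from commutativity of the product and from $\varphi(x^{0}+a_r)=\varphi(1+a_r)$ being constant in $x$, and for (3) the paper performs exactly your substitution $x\mapsto a_1x$ (after first pulling $\varphi(a_1^{l_1})$ out of the first factor); your accounting for the extra $\varphi(a_1)$ in the $\varphi$-version matches as well.

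The genuine problem is part (4), precisely at the point you flag as the ``main obstacle'' and leave as a check to be carried out later. If you carry it out, it does not close. Your reduction $\varphi(y)^{n}=\varphi(y)$ for $n$ odd and $\varphi(y)^{n}=\varepsilon(y)$ for $n$ even is right, but then: for $n$ odd, $\varphi_{(l,\ldots,l)}(a,\ldots,a)=\sum_{x}\varphi(x)\varphi(x^{l}+a)=\varphi_{l}(a)$, the Jacobsthal sum, not the modified Jacobsthal sum $\psi_{l}(a)$ asserted in the statement; for $n$ even, $\psi_{(l,\ldots,l)}(a,\ldots,a)=q-\#\{x:x^{l}=-a\}$, which equals $q-1$ only when $x^{l}=-a$ has exactly one root (e.g.\ when $\gcd(l,q-1)=1$), and
\begin{align*}
\varphi_{(l,\ldots,l)}(a,\ldots,a)=\sum_{x:\,x^{l}+a\neq 0}\varphi(x)=-\sum_{x:\,x^{l}=-a}\varphi(x),
\end{align*}
which is at most $l$ in absolute value and so cannot equal $q-1$ once $q>l+1$. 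Thus the deferred verification fails: part (4) as stated requires additional hypotheses or a corrected right-hand side, and your plan as written does not yield a proof of it. For comparison, the paper gives no argument here at all --- it asserts that (4) follows ``readily from the definition'' --- so your more careful route is the one that exposes the difficulty; nonetheless, as a proof proposal it has a gap, since the decisive computation is left undone and, once done, contradicts the claim.
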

\begin{proof}
We readily obtain $(1)$ and $(2)$ from the definition. For $(3)$, we have the following identity
\begin{align*}
 \psi_{(l_1,l_2,\ldots,l_n)}(a_1^{l_1},a_{2},\ldots,a_n)&=\sum_{x\in\mathbb{F}_q}\varphi(x^{l_1}+a_1^{l_1})
 \varphi(x^{l_2}+a_2)\cdots\varphi(x^{l_n}+a_n)\\
&=\varphi(a_1^{l})\sum_{x\in\mathbb{F}_q}\varphi\left(\frac{x^{l_1}}{a_1^{l_1}}+1\right)\varphi(x^{l_2}+a_2)\cdots\varphi(x^{l_n}+a_n).
 \end{align*}
If we replace $x$ by $a_1x$ in the second equality, we have the desired result. The proof of the second part is similar to the proof of 
first part.
Again, we readily obtain $(4)$ from the definition of $\psi_{(l_1,\,l_2,\,\ldots,\,l_n)}(a_1,\,a_{2},\,\ldots,\,a_n)$
and $\varphi_{(l_1,\,l_2,\,\ldots,\,l_n)}(a_1,\,a_{2},\,\ldots,\,a_n)$.
\end{proof}
We finally recall two theorems from \cite{barmanhyperelliptic}.
Let $a,b\in\mathbb{F}^{\times}_q$. In \cite{barmanhyperelliptic}, Barman and Kalita expressed
the number of $\mathbb{F}_q$-points on the hyperelliptic curve $E_d:y^2=x^d+ax^{d-1}+b$ in terms of Greene's $_{d-1}F_{d-2}$-hypergeometric series as stated below.
\begin{thm}\cite{barmanhyperelliptic}\label{bt1}
 Let $p$ be a prime, and $q$ be a power of $p$. If $d\geq2$ is an even 
 integer and $q\equiv 1 \pmod{2d(d-1)}$, then
\begin{align*}
 &\#\{(x, y)\in \mathbb{F}_q: y^2=x^d+ax^{d-1}+b\}\\
 &=q+\varphi(b)+q^{\frac{d}{2}}\varphi(d-1)
 _{d}F_{d-1}
\left(
\begin{array}{ccccc}
\varphi,&\varepsilon,&\chi,&\chi^2,\ldots,\chi^{\frac{d-2}{2}},&\chi^{\frac{d+2}{2}},\ldots,\chi^{d-1}\\
&\varphi,&\psi,&\psi^3,\ldots,\psi^{d-3},&\psi^{d+1},\ldots,\psi^{2d-3}
\end{array}|\alpha\right),
\end{align*}
where $\chi$ and $\psi$ are characters of order $d$ and $2(d-1)$, respectively; and $\alpha=\frac{bd^d}{a^d(d-1)^{d-1}}$.
\end{thm}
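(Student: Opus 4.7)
The plan is to start from the affine point count
$\#\{(x,y) \in \mathbb{F}_q^2 : y^2 = f(x)\} = q + \sum_{x \in \mathbb{F}_q}\varphi(f(x))$,
so the task reduces to evaluating $S := \sum_x \varphi(x^d + ax^{d-1} + b)$. The term $x = 0$ contributes $\varphi(b)$, matching the explicit $\varphi(b)$ in the statement, so what remains is $S' := \sum_{x \neq 0}\varphi(x^{d-1}(x+a) + b)$.

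I would apply Lemma \ref{l1.4} to $\varphi(b + x^{d-1}(x+a))$ with base point $b$. The $\delta$ part contributes a bounded correction only at $x = -a$ (which is absorbed into lower order terms), and after exchanging sums the non-degenerate part becomes $\tfrac{q\varphi(b)}{q-1}\sum_\chi \binom{\varphi}{\chi}\bar\chi(b)\,I(\chi)$, where $I(\chi) = \sum_{x \neq 0}\chi^{d-1}(x)\chi(x+a)$. The substitution $x \mapsto -ay$ converts this inner sum to $\chi^d(a)\chi(-1)\,J(\chi^{d-1}, \chi)$, reducing everything to a single $\chi$-indexed sum of Jacobi sums.

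The crucial step is then to write $J(\chi^{d-1},\chi) = g(\chi^{d-1})g(\chi)/g(\chi^d)$ and apply the Hasse--Davenport product formula twice: once to $g(\chi^{d-1})$ with a character of order $d-1$, and once to $g(\chi^d)$ with a character of order $d$. This expresses $J(\chi^{d-1},\chi)$ as a product of Gauss sums of the form $g(\chi_d^j\chi)$ and $g(\psi^{2j-1}\chi)$, where $\chi_d$ has order $d$ and $\psi$ has order $2(d-1)$; the hypothesis $q \equiv 1 \pmod{2d(d-1)}$ ensures both characters exist (using $\gcd(2,d-1)=1$ since $d$ is even). Since $d$ is even, $\chi_d^{d/2} = \psi^{d-1} = \varphi$, which is precisely why these exponents are omitted from the parameter lists of the theorem --- they are carried instead by the single extra $\varphi$ in the numerator and denominator of the ${}_dF_{d-1}$. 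Pairing the resulting Gauss sums as ratios produces the binomial coefficients $\binom{\chi_d^j\chi}{\psi^{2j-1}\chi}$ demanded by the Greene expansion \eqref{greene-def-2}.

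Finally, all $\chi$-free factors from the Hasse--Davenport step collapse into a single factor $\chi(\alpha)$ with $\alpha = bd^d/(a^d(d-1)^{d-1})$; this ratio appears naturally because $-a^d(d-1)^{d-1}/d^d$ is the nonzero critical value of the auxiliary polynomial $x^{d-1}(x+a)$ at $x=-a(d-1)/d$. The leading scalar $q^{d/2}\varphi(d-1)$ arises from the identities $|g(\eta)|^2 = q$ applied to the $\chi$-free Gauss sums, together with the sign contribution of the factor $\chi((d-1)^{d-1}/d^d)$ at the quadratic character. The main obstacle I anticipate is not any single identity but the combinatorial bookkeeping in the Hasse--Davenport expansion: verifying that the exponents of $\chi_d$ and $\psi$ line up exactly as stated (numerator powers $1,2,\dots,\tfrac{d-2}{2},\tfrac{d+2}{2},\dots,d-1$ and odd denominator powers), and separately treating the degenerate values of $\chi$ (trivial or quadratic) where the Gauss sum identities require correction terms.
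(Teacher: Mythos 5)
First, a point of order: the paper does not prove this statement. Theorem \ref{bt1} is quoted, with citation, from Barman and Kalita \cite{barmanhyperelliptic}, and no argument for it appears anywhere in the text (only its odd-degree companion, Theorem \ref{bt2}, is actually invoked later, in the proofs of Theorems \ref{t3.4} and \ref{t3.5}). So there is no internal proof to compare yours against; the relevant benchmark is the original reference, and your outline does follow essentially that strategy: split off $x=0$ to get the $\varphi(b)$, expand $\varphi\bigl(b+x^{d-1}(x+a)\bigr)$ via Lemma \ref{l1.4}, reduce the inner sum to $\chi^{d}(a)\chi(-1)J(\chi^{d-1},\chi)$ by $x\mapsto -ay$, pass to Gauss sums, and apply the Hasse--Davenport product formula to $g(\chi^{d-1})$ and $g(\chi^{d})$ (writing $g$ for the Gauss sum), so that the omitted exponents $\chi_d^{d/2}=\varphi$ and $\psi^{d-1}=\varphi$ are exactly what the extra $\varphi$'s in the parameter arrays carry. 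Your identification of $\alpha$ as $b$ divided by the negative of the nonzero critical value of $x^{d-1}(x+a)$ also checks out: that critical value is $-a^{d}(d-1)^{d-1}/d^{d}$.

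That said, what you have is an outline whose hand-waved steps are precisely where the work lies, so as a proof it has genuine gaps. (i) The identity $J(A,B)=g(A)g(B)/g(AB)$ fails when $AB$ is trivial, and $g(\varepsilon)=-1$; hence every $\chi$ for which $\chi$, $\chi^{d-1}$ or $\chi^{d}$ is trivial produces a lower-order correction. The final formula contains no error term, so these corrections must cancel \emph{exactly} against the $+1$ contributed by $\delta\bigl(x^{d-1}(x+a)\bigr)$ at $x=-a$; saying they are ``absorbed into lower order terms'' is not an argument, since nothing in the statement absorbs them. (ii) Greene's ${}_{d}F_{d-1}$ in \eqref{greene-def-2} is a sum of binomial coefficients $\binom{A\chi}{B\chi}$, not of Gauss-sum ratios, so after Hasse--Davenport you still must convert via $\binom{A}{B}=\frac{B(-1)}{q}J(A,\overline{B})$ and the Gauss-sum expression for $J$; the factors $B(-1)$ and $1/q$ accumulated over $d-1$ binomials, together with the $\chi$-free Gauss sums $\prod_j g(\eta^j)$, are exactly where $q^{d/2}$ and $\varphi(d-1)$ come from, and asserting that they ``collapse'' does not verify either the power of $q$ or the sign. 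You also need to reindex $\chi\mapsto\overline{\chi}$ to turn the accumulated $\overline{\chi}(\alpha)$ into the $\chi(\alpha)$ of Greene's definition, and to check that the surviving exponent lists are precisely $\chi,\dots,\chi^{(d-2)/2},\chi^{(d+2)/2},\dots,\chi^{d-1}$ upstairs and the odd powers of $\psi$ downstairs. If you intend this as a self-contained proof rather than a pointer to \cite{barmanhyperelliptic}, items (i) and (ii) must be carried out explicitly.
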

\begin{thm}\cite{barmanhyperelliptic}\label{bt2}
Let $p$ be a prime, and $q$ be a power of $p$. If $d\geq 3$ is an odd 
 integer and $q\equiv 1 \pmod{2d(d-1)}$, then
\begin{align*}
 &\#\{(x, y)\in \mathbb{F}_q: y^2=x^d+ax^{d-1}+b\}\\ 
 &=q+q^{\frac{d-1}{2}}\varphi(-ad)
 _{d-1}F_{d-2}
\left(
\begin{array}{ccccc}
\eta,&\eta^3,&\eta^5,\ldots,\eta^{d-2},&\eta^{d+2},\ldots,\eta^{2d-3},&\eta^{2d-1}\\
&\rho,&\rho^2,\ldots,\rho^{\frac{d-3}{2}},&\rho^{\frac{d+1}{2}},\ldots,\rho^{d-2},&\varepsilon
\end{array}|-\alpha\right),
\end{align*}
where $\eta$ and $\rho$ are characters of order $2d$ and $(d-1)$, respectively; and $\alpha=\frac{bd^d}{a^d(d-1)^{d-1}}$.
\end{thm}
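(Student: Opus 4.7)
\emph{Plan.} The plan is to start from the identity $\#\{(x,y)\in\mathbb{F}_q^2:y^2=f(x)\}=q+\sum_{x\in\mathbb{F}_q}\varphi(f(x))$ recalled in \eqref{points-EC}, factor $f(x)=x^{d-1}(x+a)+b$, and convert the inner character sum into Greene's hypergeometric form via an iterated application of Lemma~\ref{l1.4} combined with a multiplication (Hasse--Davenport-type) formula for Gauss/Jacobi sums.

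First I would apply Lemma~\ref{l1.4} with $A=\varphi$, inner ``$a$''$=b$ and inner ``$x$''$=x^{d-1}(x+a)$, writing
\[
\varphi(b+x^{d-1}(x+a)) = \varphi(b)\delta(x^{d-1}(x+a)) + \frac{q\varphi(b)}{q-1}\sum_{\chi}{\varphi\choose\chi}\chi\!\left(\tfrac{x^{d-1}(x+a)}{b}\right).
\]
The boundary $\delta$-contribution only fires at $x=0$ and $x=-a$ (where $f(x)=b$, since $d$ odd gives $(-a)^d+a(-a)^{d-1}=0$), producing a harmless constant to be absorbed later. After the bijection $x\mapsto -au$, which is legitimate since $(-1)^{d-1}=1$, the expression $x^{d-1}(x+a)$ becomes $a^d u^{d-1}(1-u)$, so the remaining inner sum over $u$ is the Jacobi sum $J(\chi^{d-1},\chi)=q\chi(-1){\chi^{d-1}\choose\bar\chi}$ by the definition of the binomial coefficient in the introduction, times a character value in $a,b$ that feeds into the argument.

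At this point the outer sum is of ${}_2F_1$-shape, and the upgrade to ${}_{d-1}F_{d-2}$ comes from applying the Hasse--Davenport multiplication formula to the hidden Gauss sums $G(\chi^{d-1})$ and $G(\chi^d)$ that appear through $J(\chi^{d-1},\chi)=G(\chi^{d-1})G(\chi)/G(\chi^d)$. One factors $G(\chi^d)$ as a product of Gauss sums $G(\chi\eta^j)$ with $\eta$ of order $2d$, and $G(\chi^{d-1})$ as a product involving $G(\chi\rho^k)$ with $\rho$ of order $d-1$; the hypothesis $q\equiv1\pmod{2d(d-1)}$ guarantees both characters exist. Converting every resulting Gauss-sum ratio back into a binomial via ${A\choose B}=\tfrac{B(-1)}{q}J(A,\bar B)$ yields exactly $d-2$ additional binomial factors of the form ${\eta^{2k+1}\chi\choose\rho^k\chi}$, so the summand over $\chi$ now matches Greene's template \eqref{greene-def-2} for ${}_{d-1}F_{d-2}$.

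Finally I would bookkeep the accumulated prefactors. The scalar $\chi$-values collect into $\chi(-\alpha)$ with $\alpha=\frac{bd^d}{a^d(d-1)^{d-1}}$; the $q$-powers extracted through the $d-2$ Gauss-sum cancellations assemble into $q^{(d-1)/2}$; and the quadratic pieces $\varphi(b)$ and $\chi(-1)$ together with the Hasse--Davenport normalization collect into $\varphi(-ad)$. The indices $k$ range through $\{0,1,\dots,d-2\}\setminus\{(d-1)/2\}$, which reproduces the top list $\eta,\eta^3,\dots,\eta^{d-2},\eta^{d+2},\dots,\eta^{2d-1}$ and the bottom list $\rho,\rho^2,\dots,\rho^{(d-3)/2},\rho^{(d+1)/2},\dots,\rho^{d-2},\varepsilon$ in the statement, with the middle index correctly omitted. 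The main obstacle is this Hasse--Davenport step: it requires identifying precisely the characters $\eta^{2k+1}$ and $\rho^k$ that appear, with the index $k=(d-1)/2$ skipped on each list, and checking that all normalizing constants combine exactly into $q^{(d-1)/2}\varphi(-ad)$ with argument $-\alpha$ rather than some twisted variant.
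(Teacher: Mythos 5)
The paper does not prove this statement: Theorem~\ref{bt2} is recalled verbatim from \cite{barmanhyperelliptic}, so there is no in-paper argument to compare against. Your outline follows the same route as the proof in that reference --- expand $\varphi\bigl(b+x^{d-1}(x+a)\bigr)$ via Lemma~\ref{l1.4}, reduce the inner sum to the Jacobi sum $J(\chi^{d-1},\chi)$ by the substitution $x\mapsto -au$, and split the resulting Gauss-sum ratio with the Hasse--Davenport product formula to manufacture the extra $d-2$ binomial factors. Note, however, that your sketch explicitly defers the step where essentially all of the work lies (the Hasse--Davenport bookkeeping that yields the precise parameter lists in $\eta$ and $\rho$, the prefactor $q^{(d-1)/2}\varphi(-ad)$, and the argument $-\alpha$), so as written it is a correct plan rather than a complete proof.
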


\section{Proof of the Results}
In this section, we will prove Theorem \ref{t1} and Theorem \ref{t3}. To prove Theorem \ref{t1},
 first we prove the following lemma.
 \begin{lem}\label{l3.5}
 Let $m\in\mathbb{N}$ and $q\equiv 1 \pmod{2m}$. Let $\chi_{2m}$ be a character of order $2m$ over $\mathbb{F}_q$, then we have
 \begin{equation*}
  \psi_{(m,\ldots,m)}(a_1,\ldots,a_r)=\sum_{x\in\mathbb{F}_q}\varphi(x+a_{1})\cdots\varphi(x+a_r)\sum_{k=0}^{m-1}\chi_{2m}^{2k}(x)
 \end{equation*}
and
\begin{equation*}
 \varphi_{(m,\ldots,m)}(a_1,\ldots,a_r)=\sum_{x\in\mathbb{F}_q}\varphi(x+a_{1})\cdots\varphi(x+a_r)\sum_{k=0}^{m-1}\chi_{2m}^{2k+1}(x).
\end{equation*}
\end{lem}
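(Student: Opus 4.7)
The strategy is a change of variable $y=x^m$ in both sums, using the hypothesis $q\equiv 1\pmod{2m}$ to control both the fibers of $x\mapsto x^m$ and the behavior of the quadratic character $\varphi$ on those fibers. The key numerical ingredient is that $2m\mid q-1$ makes every $m$-th root of unity in $\mathbb{F}_q^\times$ a square, which forces $\chi_{2m}^m$ (a character of exact order $2$) to coincide with the unique quadratic character $\varphi$. Consequently $\varphi(x)=\chi_{2m}(x^m)$ for all $x\in\mathbb{F}_q^\times$; equivalently, $\varphi$ is constant on fibers of $x\mapsto x^m$ and its value on the fiber above $y$ is $\chi_{2m}(y)$.

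For the $\psi$-identity, I restrict the defining sum $\psi_{(m,\ldots,m)}(a_1,\ldots,a_r)=\sum_{x\in\mathbb{F}_q}\prod_i\varphi(x^m+a_i)$ to $x\in\mathbb{F}_q^\times$ and group terms by $y=x^m$. Since $m\mid q-1$, the map $x\mapsto x^m$ is $m$-to-$1$ onto the subgroup $H:=(\mathbb{F}_q^\times)^m$ of $m$-th powers, yielding
\[
\sum_{x\in\mathbb{F}_q^\times}\prod_{i=1}^{r}\varphi(x^m+a_i)=m\sum_{y\in H}\prod_{i=1}^{r}\varphi(y+a_i).
\]
I then reinsert the standard orthogonality indicator $\mathbf{1}_H(y)=\tfrac{1}{m}\sum_{k=0}^{m-1}(\chi_{2m}^2)^k(y)$, valid because $\chi_{2m}^2$ has exact order $m$, to rewrite the right-hand side as $\sum_{y\in\mathbb{F}_q}\bigl(\prod_i\varphi(y+a_i)\bigr)\sum_{k=0}^{m-1}\chi_{2m}^{2k}(y)$, which matches the first claim.

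For the $\varphi$-identity, the calculation is parallel, except that the extra factor $\varphi(x)$ in the summand becomes $\chi_{2m}(y)$ under the change of variable by the compatibility established above. After the same $m$-to-$1$ grouping, the sum over $H$ takes the form $m\sum_{y\in H}\chi_{2m}(y)\prod_i\varphi(y+a_i)$, and the shifted orthogonality
\[
\sum_{k=0}^{m-1}\chi_{2m}^{2k+1}(y)=\chi_{2m}(y)\sum_{k=0}^{m-1}(\chi_{2m}^2)^k(y)
\]
converts this into the claimed expression $\sum_{y\in\mathbb{F}_q}\bigl(\prod_i\varphi(y+a_i)\bigr)\sum_{k=0}^{m-1}\chi_{2m}^{2k+1}(y)$.

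The main obstacle is recognizing that $\chi_{2m}^m=\varphi$ and that this requires exactly the hypothesis $2m\mid q-1$ (not merely $m\mid q-1$), since one needs both the existence of a character of order $2m$ and the descent of $\varphi$ through $m$-th powers so that it is well-defined as a function of $y=x^m$. Some care is also needed at $x=0$: under the paper's convention $\chi(0)=0$, the boundary $y=0$ contributes trivially on the right-hand side, and one checks that this is consistent with the orthogonality formulation. Once these points are settled, both identities reduce to routine orthogonality on the cyclic quotient $\mathbb{F}_q^\times/H$.
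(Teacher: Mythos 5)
Your proof is correct, and for the first identity it is essentially the paper's own argument: both insert the indicator of $m$-th powers $\tfrac{1}{m}\sum_{k=0}^{m-1}(\chi_{2m}^{2})^{k}$ and absorb the $m$-to-$1$ fibering of $x\mapsto x^{m}$. For the second identity you take a genuinely different route: you note that $\chi_{2m}^{m}$ is the unique character of order $2$, hence equals $\varphi$, so $\varphi(x)=\chi_{2m}(x^{m})$ descends through the power map, and the extra factor $\varphi(x)$ becomes $\chi_{2m}(y)$, shifting the orthogonality sum to the odd powers $\chi_{2m}^{2k+1}$. The paper never descends $\varphi$; instead it invokes Lemma \ref{l1.1} in the form $\varphi_{(m,\ldots,m)}=\psi_{(2m,\ldots,2m)}-\psi_{(m,\ldots,m)}$, expands $\psi_{(2m,\ldots,2m)}$ with the full $2m$-th power indicator $\sum_{k=0}^{2m-1}\chi_{2m}^{k}$, and subtracts the even-indexed part. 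Your version makes the role of the hypothesis $2m\mid q-1$ more transparent (it is precisely what lets $\varphi$ factor through $x\mapsto x^{m}$), while the paper's version recycles an already-proved identity and sidesteps that discussion. One caveat: your parenthetical claim that the $x=0$ boundary is \emph{consistent} is not right under the paper's stated convention $\varepsilon(0)=0$. For the first identity the left-hand side contributes $\prod_{i}\varphi(a_{i})\neq 0$ at $x=0$, whereas the right-hand side would then vanish at $y=0$; the identity requires reading $\sum_{k=0}^{m-1}\chi_{2m}^{2k}(0)$ as $1$, as the paper's Equation \ref{e3.5} does in tension with its own convention. This wrinkle is inherited from the paper rather than introduced by you, but it should be flagged as a correction (or a convention choice) rather than asserted as a check; the second identity has no such issue since both sides vanish at $x=0$.
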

\begin{proof}
Note that $\chi^{2}_{2m}$ is a character of order $m$. Therefore,
\begin{eqnarray}\label{e3.5}
\sum_{k=0}^{m-1}(\chi^{2}_{2m})^{k}(x)=\begin{cases}
                               1~\mbox{if}~ x=0\\
                               m~\mbox{if $x$ is an $m$-th power in $\mathbb{F}^{\times}_q$}\\
                               0~\mbox{otherwise}.
                               \end{cases}                               
\end{eqnarray}
Also $\chi_{2m}$ is a character of order $2m$, thus
\begin{eqnarray}\label{e3.6}
 \sum_{k=0}^{2m-1}(\chi_{2m})^{k}(x)=\begin{cases}
                               1~\mbox{if}~ x=0\\
                               2m~\mbox{if $x$ is an $2m$-th power in $\mathbb{F}^{\times}_q$}\\
                               0~\mbox{otherwise}.
                               \end{cases}                               
\end{eqnarray}
Since
\begin{equation*}
 \psi_{(m,\ldots,m)}(a_1,\ldots,a_r)=\sum_{x\in\mathbb{F}_q}\varphi(x^m+a_1)\cdots\varphi(x^m+a_r).
\end{equation*}
We use Equation \ref{e3.5} to write
\begin{equation}\label{e3.7}
 \psi_{(m,\ldots,m)}(a_1,\ldots,a_r)=\sum_{x\in\mathbb{F}_q}\varphi(x+a_{1})\cdots\varphi(x+a_r)\sum_{k=0}^{m-1}\chi_{2m}^{2k}(x).
\end{equation}
From Equation \ref{e3.6}, we obtain
\begin{equation}\label{e3.8}
 \psi_{(2m,\ldots,2m)}(a_1,\ldots,a_r)=\sum_{x\in\mathbb{F}_q}\varphi(x+a_{1})\cdots\varphi(x+a_r)\sum_{k=0}^{2m-1}\chi_{2m}^{k}(x).
\end{equation}
Now we recall that
\begin{equation}\label{e3.9}
 \varphi_{(m,\ldots,m)}(a_1,\ldots,a_r)=\psi_{(2m,\ldots,2m)}(a_1,\ldots,a_r)-\psi_{(m,\ldots,m)}(a_1,\ldots,a_r).
\end{equation}
Finally, in view of Equations \ref{e3.7}, \ref{e3.8} and \ref{e3.9}, we conclude that
\begin{align*}
 \varphi_{(m,\ldots,m)}(a_1,\ldots,a_r)=&\sum_{x\in\mathbb{F}_q}\varphi(x+a_{1})\cdots\varphi(x+a_r)\sum_{k=0}^{2m-1}\chi_{2m}^{k}(x)
 \\-&\sum_{x\in\mathbb{F}_q}\varphi(x+a_{1})\cdots\varphi(x+a_r)\sum_{k=0}^{m-1}\chi_{2m}^{2k}(x)\\
 \varphi_{(m,\ldots,m)}(a_1,\ldots,a_r)=&\sum_{x\in\mathbb{F}_q}\varphi(x+a_{1})\cdots\varphi(x+a_r)
\sum_{k=0}^{m-1}\chi_{2m}^{2k+1}(x).
\end{align*}
\end{proof}
\begin{proof}[{\bf Proof of Theorem} \ref{t1}]
We use Lemma \ref{l3.5} to write
\begin{equation*}
 \psi_{(m,m,m)}(a,b,c)=\sum_{x\in\mathbb{F}_q}\varphi(x+a)\varphi(x+b)\varphi(x+c)\sum_{k=0}^{m-1}\chi_{m}^{k}(x).
\end{equation*}
Using Lemma \ref{l1.4}, and then from definition of $\delta$, we deduce that
\begin{align}\label{mpe1}
\psi_{(m,m,m)}(a,b,c)=&\frac{q\varphi(a)}{q-1}\sum_{k=0}^{m-1}\sum_\chi{\varphi \choose\chi}\sum_{x}\varphi(x+b)\varphi(x+c)
\chi\left(\frac{x}{a}\right)\chi_m^k(x).
\end{align}
If we replace $x$ by $-bx$ in Equation \ref{mpe1}, we obtain
\begin{align*}
 \psi_{(m,m,m)}(a,b,c)=&\frac{q\varphi(a)}{q-1}\sum_{k=0}^{m-1}\sum_\chi{\varphi \choose\chi}\sum_{x}\varphi(-bx+b)\varphi(-bx+c)
\chi\left(\frac{-bx}{a}\right)\chi_m^k(-bx)
\end{align*}
\begin{align*}
=&\frac{q\varphi(abc)}{q-1}\sum_{k=0}^{m-1}\chi_m^{k}(-b)\sum_\chi{\varphi \choose\chi}\chi\left(\frac{-b}{a}\right)\sum_{x}\varphi(1-x)
\varphi(1-\frac{b}{c}x)\chi\chi_m^k(x)\\
=&\frac{q\varphi(abc)}{q-1}\sum_{k=0}^{m-1}\chi_m^{k}(-b)\sum_\chi{\varphi \choose\chi}\chi\left(\frac{-b}{a}\right)\sum_{x}
\chi\chi_m^k(x)
\overline{\chi\chi_m^k}\chi\chi_{2m}^{m+2k}(1-x)\varphi(1-\frac{b}{c}x).
\end{align*}
From Equation \ref{d1.2}, the inner sum can be written in terms of Greene's finite field hypergeometric series as given below
\begin{align}\label{e3.12}
 \psi_{(m,m,m)}(a,b,c)=&\frac{q^2\varphi(-abc)}{q-1}\sum_{k=0}^{m-1}\chi_m^{k}(-b)\sum_\chi{\varphi \choose\chi}\chi\left(\frac{-b}{a}\right)
 {_2}F_1\left(
\begin{array}{cc}
 \varphi,&\chi\chi^k_m\\
 &\chi\chi^{m+2k}_{2m}
\end{array}|\frac{b}{c}
\right).
\end{align}
We use Equations \ref{points-EC} and \ref{e1.4} to write number of $\mathbb{F}_q$-points on $E_m$ as follows
\begin{equation}\label{e3.13}
 \#E_m(\mathbb{F}_q)=r+q+\psi_{m,m,m}(a,b,c).
\end{equation}
Using Equations \ref{e3.12} and \ref{e3.13}, the number of $\mathbb{F}_q$-rational points on algebraic curves $E_m$ is express as
\begin{align*}
\#E_m(\mathbb{F}_q)&=r+q+\frac{q^2\varphi(-abc)}{q-1}\sum_{k=0}^{m-1}\sum_{\chi}{\varphi \choose\chi}\overline{\chi}(a)\chi\chi^{k}_m(-b)~
{_2}F_1\left(
\begin{array}{cc}
 \varphi,&\chi\chi^k_m\\
 &\chi\chi^{m+2k}_{2m}
\end{array}|\frac{b}{c}
\right).
\end{align*}
Next, we prove second equality. The proof is similar to first equality and so we omit some steps.
First we use Lemma \ref{l3.5} and then using Lemma \ref{l1.4} and definition of $\delta$, we obtain
\begin{align}\label{e3.90}
\psi_{(m,m,m)}(a,b,c)=&\frac{q\varphi(a)}{q-1}\sum_{k=0}^{m-1}\sum_\chi{\varphi \choose\chi}\sum_{x}\varphi(x+b)\varphi(x+c)
\chi\left(\frac{x}{a}\right)\chi_{2m}^{2k+1}(x).
\end{align}
Replace $x$ by $-bx$ in Equation \ref{e3.90}, we obtain
\begin{align*}
 \psi_{(m,m,m)}(a,b,c)=&\frac{q\varphi(a)}{q-1}\sum_{k=0}^{m-1}\sum_\chi{\varphi \choose\chi}\sum_{x}\varphi(-bx+b)\varphi(-bx+c)
\chi\left(\frac{-bx}{a}\right)\chi_{2m}^{2k+1}(-bx)
\end{align*}
\begin{align*}
=\frac{q\varphi(abc)}{q-1}\sum_{k=0}^{m-1}\chi_{2m}^{2k+1}(-b)\sum_\chi{\varphi \choose\chi}\chi\left(\frac{-b}{a}\right)\sum_{x}
\chi\chi_{2m}^{2k+1}(x)
\overline{\chi\chi_{2m}^{2k+1}}\chi\chi_{2m}^{m+2k+1}(1-x)\varphi(1-\frac{b}{c}x).
\end{align*}
From Equation \ref{d1.2}, the inner sum can be written as
\begin{align}\label{e3.14}
 \psi_{(m,m,m)}(a,b,c)=&\frac{q^2\varphi(-abc)}{q-1}\sum_{k=0}^{m-1}\chi_{2m}^{2k+1}(-b)\sum_\chi{\varphi \choose\chi}\chi\left(\frac{-b}{a}\right)
 {_2}F_1\left(
\begin{array}{cc}
 \varphi,&\chi\chi^{2k+1}_{2m}\\
 &\chi\chi^{m+2k+1}_{2m}
\end{array}|\frac{b}{c}
\right).
\end{align}
Now, we use Equations \ref{points-EC} and \ref{e1.3} to write number of $\mathbb{F}_q$- points on $E^{'}_m$ as follows
\begin{equation}\label{e3.15}
 \#E^{'}_m(\mathbb{F}_q)=r+q+\varphi_{m,m,m}(a,b,c).
\end{equation}
Using Equations \ref{e3.14}, \ref{e3.15} and simplifying, we obtain
\begin{align*}
 \#E'_m(\mathbb{F}_q)&=r+q+\frac{q^2\varphi(-abc)}{q-1}\sum_{k=0}^{m-1}\chi^{2k+1}_{2m}(-b)\sum_{\chi}{\varphi \choose\chi}\chi\left(\frac{-b}{a}\right)
{_2}F_1\left(
\begin{array}{cc}
 \varphi,&\chi\chi^{2k+1}_{2m}\\
 &\chi\chi^{m+2k+1}_{2m}
\end{array}| \frac{b}{c}
\right).
\end{align*}
\end{proof}
For proving Theorem \ref{t3}, first we prove following Lemmas. Then using simple induction method we will prove Theorem \ref{t3}.
\begin{lem}\label{c1.7}
For $\psi,\chi_1,\ldots,\chi_n$ are characters of $\mathbb{F}_q$ and $a_i,b_i\in\mathbb{F}^{\times}_q$ $\forall i=1,\ldots,n$
with $a=a_1+\cdots+a_n$ and $b=b_1+\cdots+b_n$. Then
\begin{align*}
 \sum_{x\in\mathbb{F}_q}\psi(ax^2)\chi_1(b_1-a_{1}x^2)\cdots\chi_n(b_n-a_nx^2)=J_b(\psi,\chi_1,\ldots,\chi_n)+\varphi(a)J_b(\varphi\psi,\chi_1,
 \ldots,\chi_n).
\end{align*}
\end{lem}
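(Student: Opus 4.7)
The natural approach is to apply Lemma \ref{l1.1} with the auxiliary function
$$f(x) := \psi(ax)\chi_1(b_1-a_1 x)\cdots\chi_n(b_n-a_n x),$$
noting that $f(x^2)$ reproduces exactly the integrand on the left-hand side. Lemma \ref{l1.1} then delivers
$$\sum_{x\in\mathbb{F}_q} f(x^2) = \sum_{x\in\mathbb{F}_q} f(x) + \sum_{x\in\mathbb{F}_q}\varphi(x)f(x),$$
so the task reduces to identifying each of the two resulting one-variable sums with a generalized Jacobi sum of the form $J_b$.

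For the first sum, the key step is to use the parametrization $c_0 = ax$ and $c_i = b_i - a_i x$ for $i = 1,\ldots,n$, under which the relation
$$c_0 + c_1 + \cdots + c_n = ax + (b_1 - a_1 x) + \cdots + (b_n - a_n x) = b$$
holds identically in $x$; this matches $\sum_x f(x)$ with $J_b(\psi,\chi_1,\ldots,\chi_n)$. For the second sum I would exploit the quadratic identity $\varphi^2 = \varepsilon$, which gives $\varphi(x) = \varphi(a)\varphi(ax)$ for $a \in \mathbb{F}_q^\times$, and therefore
$$\sum_{x}\varphi(x)f(x) = \varphi(a)\sum_{x}(\varphi\psi)(ax)\chi_1(b_1-a_1 x)\cdots\chi_n(b_n-a_n x).$$
The same reparametrization then identifies this sum with $\varphi(a)J_b(\varphi\psi,\chi_1,\ldots,\chi_n)$, and adding the two contributions gives the claimed identity.

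The point requiring the most care is the bookkeeping in the $\varphi$-twisted piece: one must faithfully track the factor $\varphi(a)$ extracted via $\varphi(x) = \varphi(a)\varphi(ax)$, and apply the parametrization $c_0 = ax$, $c_i = b_i - a_i x$ consistently in both summands so that the generalized Jacobi sums on the right-hand side appear in the correct form. Beyond this bookkeeping, the argument is a short and essentially formal consequence of Lemma \ref{l1.1}, and it should serve cleanly as the $m=2$ base case for the induction required to establish Theorem \ref{t3}.
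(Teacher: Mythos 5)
Your reduction via Lemma \ref{l1.1} is exactly the paper's first step, and your bookkeeping in the twisted piece ($\varphi(x)=\varphi(a)\varphi(ax)$ for $a\in\mathbb{F}_q^{\times}$) is fine. The step that fails is the identification of $\sum_x f(x)$ with $J_b(\psi,\chi_1,\ldots,\chi_n)$. By the paper's definition, $J_b(\psi,\chi_1,\ldots,\chi_n)$ is a sum over \emph{all} $(n+1)$-tuples $(c_0,c_1,\ldots,c_n)\in\mathbb{F}_q^{n+1}$ with $c_0+c_1+\cdots+c_n=b$, i.e.\ over $q^{n}$ tuples, whereas your parametrization $c_0=ax$, $c_i=b_i-a_ix$ traces out only the line $\{(ax,\,b_1-a_1x,\ldots,b_n-a_nx):x\in\mathbb{F}_q\}$ inside that hyperplane, which has just $q$ points. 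For $n=1$ the map $x\mapsto(a_1x,\,b_1-a_1x)$ is a bijection onto $\{c_0+c_1=b_1\}$ and the identification is valid, but for $n\geq 2$ it is far from surjective and the two sides genuinely differ: if $\psi,\chi_1,\ldots,\chi_n$ and their product are all nontrivial, then $|J_b(\psi,\chi_1,\ldots,\chi_n)|=q^{n/2}$, while the one-variable sum $\sum_x f(x)$ is $O(\sqrt q)$ by Weil's bound (or compare the trivial-character case, where the left side is about $q$ and the right side about $q^{n}$). So the sentence ``this matches $\sum_x f(x)$ with $J_b(\psi,\chi_1,\ldots,\chi_n)$'' is the gap, and with it the stated identity fails for $n\geq 2$.

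To be fair, this is not a defect you introduced: the paper's own proof is the identical application of Lemma \ref{l1.1} followed by the bare assertion ``from the definition of Jacobi sum, we have'' the result, and it is precisely by writing the parametrization out that your version exposes where that assertion breaks. A repairable statement would either restrict to $n=1$, or keep the right-hand side as the two one-variable sums $\sum_x\psi(ax)\prod_i\chi_i(b_i-a_ix)$ and $\varphi(a)\sum_x\varphi\psi(ax)\prod_i\chi_i(b_i-a_ix)$ without renaming them as generalized Jacobi sums; as written, neither your argument nor the paper's establishes the lemma for $n\geq 2$.
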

\begin{proof}
From Lemma \ref{l1.1}, we obtain
\begin{align*}
\sum_{x\in\mathbb{F}_q}\psi(ax^2)\chi_1(b_1-a_{1}x^2)\cdots\chi_n(b_n-a_nx^2)=&\sum_{x\in\mathbb{F}_q}\psi(ax)\chi_1(b_1-a_{1}x)
\cdots\chi_n(b_n-a_nx)\\
+&\sum_{x\in\mathbb{F}_q}\varphi(x)\psi(ax)\chi_1(b_1-a_{1}x)\cdots\chi_n(b_n-a_nx).
\end{align*}
From the definition of Jacobi sum, we have
\begin{align*}
\sum_{x\in\mathbb{F}_q}\psi(ax^2)\chi_1(b_1-a_{1}x^2)\cdots\chi_n(b_n-a_nx^2)=&J_b(\psi,\chi_1,\cdots,\chi_n)+\varphi(a)J_{b}(\varphi\psi,\chi_1,
\cdots,\chi_n). 
\end{align*}
\end{proof}
\begin{lem}\label{c1.8}
Let $\psi,\chi_1,\ldots,\chi_n$ are characters of $\mathbb{F}_q$ such that $q\equiv 1 \pmod{4}$ and $a_i,b_i\in\mathbb{F}^{\times}_q$ $\forall i=1,\ldots,n$
with $a=a_1+\cdots+a_n$ and $b=b_1+\cdots+b_n$. Let $\psi_2$ and $\psi_{4}$ are characters of order $2$ and $4$ respectively, then
\begin{align*}
\sum_{x\in\mathbb{F}_q}\psi(ax^4)\chi_1(b_1-a_{1}x^4)\cdots\chi_n(b_n-a_nx^4)=
\sum_{k=0}^{3}\psi_{2}^{k}(a)J_b(\psi_4^k\psi,\chi_1,\cdots,\chi_n).
\end{align*}
\end{lem}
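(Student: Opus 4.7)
The plan is to reduce Lemma \ref{c1.8} to Lemma \ref{c1.7} by one application of Lemma \ref{l1.1}, exactly mirroring the step from Lemma \ref{c1.4} to Lemma \ref{c15}. Set $f(x) = \psi(ax^2)\chi_1(b_1-a_1x^2)\cdots\chi_n(b_n-a_nx^2)$, so that the summand on the left-hand side of Lemma \ref{c1.8} is $f(x^2)$. Lemma \ref{l1.1} then gives $\sum_x f(x^2) = \sum_x f(x) + \sum_x \varphi(x) f(x)$, splitting the sum into two pieces.

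The first piece $\sum_x f(x)$ matches the left-hand side of Lemma \ref{c1.7} and evaluates immediately to $J_b(\psi, \chi_1, \ldots, \chi_n) + \varphi(a) J_b(\varphi\psi, \chi_1, \ldots, \chi_n)$. For the second piece, the key trick (analogous to $\varphi(x)\chi(x^2) = (\chi_4\chi)(x^2)$ in the proof of Lemma \ref{c15}) is the identity $\varphi(x) = \psi_4(x^2)$, valid because $\psi_4$ has order $4$ and hence $\psi_4^2 = \varphi$. Combining this with $\psi(ax^2) = \psi(a)\psi(x^2)$ and the multiplicativity $(\psi\psi_4)(ax^2) = \psi(a)\psi_4(a)(\psi\psi_4)(x^2)$, I rewrite
\[
\varphi(x)\psi(ax^2) \;=\; \overline{\psi_4}(a)\,(\psi\psi_4)(ax^2).
\]
Applying Lemma \ref{c1.7} with $\psi$ replaced by $\psi\psi_4$ then evaluates the second piece as $\overline{\psi_4}(a)\bigl[J_b(\psi\psi_4, \chi_1, \ldots, \chi_n) + \varphi(a) J_b(\varphi\psi\psi_4, \chi_1, \ldots, \chi_n)\bigr]$.

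Combining the two pieces yields four Jacobi-sum terms. Using the identities $\varphi = \psi_4^2$, $\varphi\psi_4 = \psi_4^3$, and $\psi_4^4 = \varepsilon$, these can be reindexed by $k \in \{0, 1, 2, 3\}$ according to the power of $\psi_4$ multiplying $\psi$ inside the Jacobi sum, producing a sum of the desired shape $\sum_{k=0}^{3}\alpha_k(a)\,J_b(\psi_4^k\psi, \chi_1, \ldots, \chi_n)$.

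The main obstacle is this final reindexing step: careful bookkeeping of the character identities above is needed to match the coefficients $\alpha_k(a)$ with the form $\psi_2^k(a)$ asserted in the statement, and in particular to reconcile the natural appearance of $\overline{\psi_4}(a)$ and $\psi_4(a)$ as coefficients with the stated powers of the order-$2$ character $\psi_2$.
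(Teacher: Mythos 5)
Your reduction is exactly the route the paper intends --- its own ``proof'' of Lemma \ref{c1.8} is only the remark that it is similar to Lemma \ref{c1.7} --- and the two pieces you produce via Lemma \ref{l1.1}, together with the identity $\varphi(x)\psi(ax^2)=\overline{\psi_4}(a)\,(\psi\psi_4)(ax^2)$, are computed correctly. The problem is precisely the step you defer to the end: the reindexing cannot be completed in the form the statement asserts. Collecting your four terms gives
\[
J_b(\psi,\chi_1,\ldots,\chi_n)+\overline{\psi_4}(a)\,J_b(\psi_4\psi,\chi_1,\ldots,\chi_n)+\varphi(a)\,J_b(\psi_4^{2}\psi,\chi_1,\ldots,\chi_n)+\psi_4(a)\,J_b(\psi_4^{3}\psi,\chi_1,\ldots,\chi_n),
\]
that is, $\sum_{k=0}^{3}\overline{\psi_4^{\,k}}(a)\,J_b(\psi_4^{k}\psi,\chi_1,\ldots,\chi_n)$. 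The statement instead claims the coefficients $\psi_2^{k}(a)$, which take only the values $1,\varphi(a),1,\varphi(a)$. Since $\overline{\psi_4}(a)$ and $\psi_4(a)$ are primitive fourth roots of unity whenever $a$ is a non-square, no amount of bookkeeping with $\varphi=\psi_4^{2}$ and $\psi_4^{4}=\varepsilon$ will convert $\overline{\psi_4}(a)$ into $\varphi(a)$; the two expressions genuinely differ unless $a$ is a fourth power. So either the lemma as printed contains a typo --- the coefficient should be $\overline{\psi_4^{\,k}}(a)=\psi_4^{-k}(a)$, which is consistent with the coefficient $\chi_4^{k}(a^{-1})$ appearing in the parallel Lemma \ref{c15}, and which reduces to $\varphi^{k}(a)$ in the $m=2$ case of Lemma \ref{c1.7} because $\overline{\psi_2}=\psi_2=\varphi$ --- or your argument does not prove the stated identity. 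As written, your proposal stops exactly where this discrepancy must be confronted, so it is incomplete: you should either derive the corrected coefficient and note that the statement (and correspondingly Theorem \ref{t3}) needs amending, or exhibit why $\psi_4^{-k}(a)=\psi_2^{k}(a)$, which is false in general.

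A secondary caveat, inherited from the paper rather than introduced by you: the final identification of $\sum_{x}\rho(ax)\prod_i\chi_i(b_i-a_ix)$ with $J_b(\rho,\chi_1,\ldots,\chi_n)$, which you import by citing Lemma \ref{c1.7}, is only literally valid for $n=1$; for $n\geq 2$ the left side is a sum over a one-parameter family while $J_b$ sums over the full hyperplane $c_0+\cdots+c_n=b$. Your proof is no worse off than the paper's on this point, but if you intend a self-contained argument you should flag it.
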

\begin{proof}
 Proof is similar to Lemma \ref{c1.7}.
\end{proof}
\begin{proof}[{\bf Proof of Theorem} \ref{t3}]
In view of Lemmas \ref{c1.7} and \ref{c1.8}, we prove Theorem \ref{t3} by simple induction method.
\end{proof}
\section{Evaluation of character sums $\psi_{(1,1,1)}(a,b,c)$, $\varphi_{(1,1,1)}(a,b,c)$ and $\psi_{(2,2,2)}(a,b,c)$}
In this section, we will prove Theorems
\ref{t3.4}, \ref{t3.5} and \ref{t3.6}. These theorems give the special values of 
character sums $\psi_{(l,m,n)}(a,b,c)$ and $\varphi_{(l,m,n)}(a,b,c)$ in terms of hypergeometric function.
\begin{proof}[{\bf Proof of Theorem} \ref{t3.4}]
 A change of variable $(x,y)\mapsto(x+h,y)$ takes the elliptic curve $y^2=x^3+(a+b+c)x^2+(ab+bc+ca)x+abc$ to $y^2=x^3+dx^2+e$, where 
$d=3h+a+b+c$ and $e=h^3+(a+b+c)h^2+(ab+bc+ca)h+abc$. Clearly, we have
\begin{align*}
 &\#\{(x,y)\in\mathbb{F}^{2}_q:y^2=x^3+dx^2+e\}\\
 &=\#\{(x,y)\in\mathbb{F}^{2}_q:y^2=x^3+(a+b+c)x^2+(ab+bc+ca)x+abc\}\\
 &=\sum_{x\in\mathbb{F}_q}\sum_{\chi^2=\varepsilon}\chi(x^3+(a+b+c)x^2+(ab+bc+ca)x+abc)\\
&=q+\sum_{x\in\mathbb{F}_q}\varphi(x^3+(a+b+c)x^2+(ab+bc+ca)x+abc).
\end{align*}
In view of Equation \ref{e1.4}, we have
\begin{align*}
\#\{(x,y)\in\mathbb{F}^{2}_q:y^2=x^3+dx^2+e\}=q+\psi_{(1,1,1)}(a,b,c).
\end{align*}
From Theorem \ref{bt2}, we have a desired result.
\end{proof}
\begin{proof}[{\bf Proof of Theorem} \ref{t3.5}]
From Equation \ref{e1.3}, we have
\begin{align*}
\varphi_{(1,1,1)}(a,b,c)=&\sum_{x\in\mathbb{F}_q}\varphi(x)\varphi(x+a)\varphi(x+b)\varphi(x+c)\\
=&\sum_{x\in\mathbb{F}^{\times}_q}\varphi(1+\frac{a}{x})\varphi(1+\frac{b}{x})\varphi(1+\frac{c}{x}).
\end{align*}
 If we replace $x$ by $\frac{a}{x}$, we deduce that
\begin{align*}
 \varphi_{(1,1,1)}(a,b,c)=&\sum_{x\in\mathbb{F}^{\times}_q}\varphi(1+x)\varphi(1+\frac{b}{b}x)\varphi(1+\frac{c}{a}x)
 \end{align*}
 \begin{align*}
 =&\varphi(bc)\sum_{x\in\mathbb{F}^{\times}_q}\varphi\left(x^3+\left(\frac{ab+bc+ca}{bc}\right)x^2+\left(\frac{ab+ac+a^2}{bc}\right)x+
 \frac{a^2}{bc}\right)\\
 =&\varphi(bc)\sum_{x\in\mathbb{F}_q}\varphi\left(x^3+\left(\frac{ab+bc+ca}{bc}\right)x^2+\left(\frac{ab+ac+a^2}{bc}\right)x+
 \frac{a^2}{bc}\right)-1.
\end{align*}
We now simplify this expression using the same techniques as in the proof of Theorem \ref{t3.4}
\begin{align*}
\#\{(x,y)\in\mathbb{F}^{2}_q:y^2=x^3+fx^2+g\}=&q+\frac{\varphi_{(1,1,1)}(a,b,c)+1}{\varphi(bc)}\\
\varphi_{(1,1,1)}(a,b,c)=&-1+q\varphi(bc)\varphi(-3f){_2}F_1
\left(
\begin{array}{cc}
T^{\frac{(q-1)}{6}},&T^{\frac{5(q-1)}{6}}\\
&\varepsilon
\end{array}|-\frac{3^3g}{2^2f^3}
\right),
\end{align*}
where $f=3h+\frac{ab+bc+ca}{bc}$ and $g=h^3+\frac{ab+bc+ca}{bc}h^2+\frac{(ab+ca+a^2)}{bc}h+\frac{a^2}{bc}$.
\end{proof}
\begin{proof}[{\bf Proof of Theorem} \ref{t3.6}] In view of Lemma \ref{l1.1} and using Theorems \ref{t3.4} and \ref{t3.5}, we obtain desired result.
\end{proof}
\bibliographystyle{plain} 
\bibliography{ref}

\begin{thebibliography}{10}

\bibitem{barman2012hypergeometric}
R.~Barman and G.~Kalita.
\newblock Hypergeometric functions and a family of algebraic curves.
\newblock {\em The Ramanujan Journal}, 2(28):175--185, 2012.

\bibitem{barmanhyperelliptic}
R.~Barman and G.~Kalita.
\newblock Hyperelliptic curves over $\mathbb{F}_q $ and {G}aussian
  hypergeometric series.
\newblock {\em Journal of the Ramanujan Mathematical Society}, 30(3):331--348,
  2015.

\bibitem{barman2014hyperelliptic}
R.~Barman, G.~Kalita, and N.~Saikia.
\newblock Hyperelliptic curves and values of gaussian hypergeometric series.
\newblock {\em Archiv der mathematik}, 102(4):345--355, 2014.

\bibitem{berndt1979sums}
B.~C. Berndt and R.~J. Evans.
\newblock Sums of {G}auss, {J}acobi, and {J}acobsthal.
\newblock {\em J. Number Theory}, 11(3):349--398, 1979.

\bibitem{berndt1998gauss}
B.~C. Berndt, R.~J. Evans, and K.~S. Williams.
\newblock {\em {G}auss and {J}acobi sums}.
\newblock Wiley New York, 1998.

\bibitem{greene1987hypergeometric}
J.~Greene.
\newblock Hypergeometric functions over finite fields.
\newblock {\em Trans. Amer. Math. Soc.}, 301(1):77--101, 1987.

\bibitem{Lin-Tu}
Y.~H. Lin and F.~T. Tu.
\newblock Twisted kloosterman sums.
\newblock {\em J. Number Theory}, 147:666--690, 2015.

\bibitem{Ono}
K.~Ono.
\newblock Values of gaussian hypergeometric series.
\newblock {\em Trans. Amer. Math. Soc.}, 350(3):1205--1223, 1998.

\bibitem{sadek2016jacobs}
M.~Sadek.
\newblock Character sums, {G}aussian hypergeometric series, and a family of
  hyperelliptic curves.
\newblock {\em International Journal of Number Theory}, 12(08):2173--2187,
  2016.

\bibitem{williams1979evaluation}
K.~S. Williams.
\newblock Evaluation of character sums connected with elliptic curves.
\newblock {\em Proceedings of the American Mathematical Society},
  73(3):291--299, 1979.

\end{thebibliography}
\end{document}